\definecolor{mahogany}{cmyk}{0, 0.77, 0.87, 0}
\definecolor{salmon}{cmyk}{0, 0.53, 0.38, 0}
\definecolor{melon}{cmyk}{0, 0.46, 0.50, 0}
\definecolor{yellowgreen}{cmyk}{0.44, 0, 0.74, 0}
\definecolor{brickred}{cmyk}{0, 0.89, 0.94, 0.28}
\definecolor{OliveGreen}{cmyk}{0.64, 0, 0.95, 0.40}
\definecolor{RawSienna}{cmyk}{0, 0.72, 1.0, 0.45}
\definecolor{ZurichRed}{rgb}{1, 0, 0} 
\begin{document}

\numberwithin{equation}{section}

\newtheorem{thm}{Theorem}[section]        
\newtheorem{cor}{Corollary}[section]
\newtheorem{lem}{Lemma}[section]
\newtheorem{prop}{Proposition}[section]
\newtheorem{rmk}{Remark}[section]
\newtheorem{example}{Example}[section]

\newcommand{\R}{\mathbb{R}}                  
\newcommand{\set}[1]{ \left\{#1\right\} }
\newcommand{\mysum}[3]{\sum\limits_{#1=#2}^{#3}}          
\newcommand{\myprod}[3]{\prod\limits_{#1=#2}^{#3}}
\newcommand{\E}[1]{\mathbb{E}\left[#1\right]}
\newcommand{\wh}[1]{\widehat{#1}}              
\newcommand{\eid}{\,\,{\buildrel \mathcal{D} \over =}\,\,}   
\newcommand{\abs}[1]{\left|#1\right|}
\newcommand{\bks}[1]{\left[#1\right]}
\newcommand{\tgo}[2]{#1\rightarrow #2}
\newcommand{\tred}[1]{\textcolor{red}{#1}}
\newcommand{\pthesis}[1]{\left(#1\right)}

\title[Additive Functionals of L\'evy processes]{a decomposition for additive functionals of 
L\'{e}vy Processes}

\author{Luis Acu\~na Valverde*}\thanks{* Supported in part  by NSF Grant
\# 0603701-DMS under PI Rodrigo Ba\~nuelos}
\address{Department of Mathematics, Purdue University, West Lafayette, IN 47907, USA}
\email{lacunava@math.purdue.edu}

\begin{abstract}
Motivated by the recent results of  Nualart and Xu \cite{Nualart} concerning limits laws for occupation times of one dimensional symmetric stable processes, this paper  proves   a decomposition for functionals of one dimensional symmetric  L\'{e}vy processes under certain conditions on  the characteristic exponent and computes the moments of the decomposition. 
\end{abstract}
\maketitle
{{\it\small{\bf Keywords:} L\'{e}vy processes, characteristic exponent, Fourier Transform, weak convergence, relativistic stable processes.}}

\section{introduction}
Let $X=\set{X_t}_{t\geq 0}$ be a one-dimensional symmetric L\'{e}vy process started at zero on the probability space $\left(\Omega, \mathbb{P}, \set{\mathcal{F}_t}_{t\geq0}\right)$ with characteristic function  given by
\begin{align}\label{char}
\E{e^{-\dot{\iota}\,x\cdot X_s}}=e^{-s\,\Psi(x)}.
\end{align}

In this paper, we are interested in finding suitable positive and increasing sequences $\set{a(n): n\in\mathbb{N}}$ and $\set{b(n,t): n\in\mathbb{N}}$, $t>0$, both tending to $\infty$ as $\tgo{n}{\infty}$,   such that under appropriate  conditions on the characteristic exponent $\Psi(x)$ and the function $f$, the additive functional
\begin{equation}\label{addfunc}
\frac{1}{a(n)}\int_{0}^{b(n,t)}ds\,f(X_s) 
\end{equation}
can be decomposed as a sum of two processes 
\begin{equation}\label{decomp}
I_n^{(1)}(t)+I_n^{(2)}(t),
\end{equation}
 where $I_n^{(1)}(t)$ converges to zero in $L^p$, for some   $p\geq1$, and  $\set{I_n^{(2)}(t), n\in \mathbb{N}}$ is a uniformly integrable sequence with finite moments and with further probabilistic properties.
 
The  foregoing decomposition is of great interest since employing certain  techniques it is possible to prove weak convergence of \eqref{addfunc} to a non--degenerate random variable.  One of these techniques, that may immediately provide weak convergence, consists of  proving the existence of local times $\set{L_t(x), t\geq0, x\in \R}$ for the process $X$. This  is equivalent, according to \cite{Rosen}, to showing that
\begin{equation}\label{Elt}
\int_{\R}dx\,\Re\left(\frac{1}{1+\Psi(x)}\right)<\infty.
\end{equation} 
The local time describes the amount of time  spent by the process at $x$ in the interval $[0,t]$ and it is defined (see  \cite{Apple})  as 
\begin{align*}
L_t(x)=\varlimsup_{\tgo{\epsilon}{0+}}\frac{1}{2\epsilon}
\int_{0}^{t}ds\,\mathbbm{1}_{\set{{\abs{X_s-x}<\epsilon}}}.
\end{align*} 
We also have the occupation density formula
\begin{align*}
\int_{0}^{t}ds\,f(X_s)=\int_{\R}dx\,f(x)\,L_t(x).
\end{align*}
Particular examples of processes having local times are the symmetric $\alpha$-stable processes with $\Psi(x)=|x|^{\alpha}$, $1<\alpha\leq 2$,
 for which it can be shown by appealing to the well-known scaling property, $X_{\eta t}\overset{\mathcal{L}}=\eta^{1/\alpha}X_t$, $\eta>0$, that
\begin{equation*}
n^{\frac{1-\alpha}{\alpha}}\int_{0}^{nt}ds\,f(X_s) 
\overset{{\mathcal{L}}}{\longrightarrow} L_t(0)\int_{\R}dx\,f(x),
\end{equation*}
as $\tgo{n}{\infty}$, for $f\in L^{\infty}(\R)\cap L^1(\R)$;  see \cite{Nualart} for further details. 
Here, $\overset{\mathcal{L}}=$ means equality in law. We also refer the reader to \cite{Rosen1} for a   probabilistic approach involving local times. 

A further approach used to prove weak convergence  is the method of moments. The reader interested in a comprehensive presentation of this topic should consult \cite{Krish}.  This technique  is very restrictive since it requires the existence and the finiteness of the quantities $$m_k(t)=\lim\limits_{\tgo{n}{\infty}}\E{\left(I_{n}^{(2)}(t)\right)^k}.$$ Moreover, $\set{m_k(t):k\in \mathbb{N}, t>0}$ must  uniquely determine the distribution of a random variable. In this direction, Carleman's condition stated in \cite{Krish} asserts that 
\begin{equation}\label{CC}
\mysum{k}{1}{\infty}\pthesis{m_{2k}(t)}^{-\frac{1}{2k}}=\infty
\end{equation}
is sufficient to guarantee uniqueness. The moment  techniques have been used in the  recent paper of Nualart and Xu \cite{Nualart} where a decomposition similar to \eqref{decomp} is proved  for the symmetric Cauchy process $X$ where 
$\Psi(x)=|x|$.   There, it is also established that
\begin{equation*}
\frac{1}{n}\int_{0}^{e^{nt}}ds\,f(X_s)\overset{\mathcal{L}}{\longrightarrow}Z(t)\int_{\R}dx\,f(x),
\end{equation*}
as $\tgo{n}{\infty}$, for  all bounded functions $f$ with  
 $\int_{\R}dx\,|x|\,|f(x)|<\infty$, where  $Z(t)$ is an exponential random variable with parameter $t^{-1}$.
    
Our results in this paper are motivated by the Nualart--Xu result \cite{Nualart} and the Fourier transform techniques used by Ba\~nuelos and S\'a Barreto in \cite{Ba.Sab} and in the author's paper \cite{Acuna} to compute the heat invariants for Schr\"odinger operators.  
In order to state our main theorems, we will  impose some
conditions  not only on  the function $f$ to be considered in $\eqref{addfunc}$ but also on the characteristic exponent $\Psi(x)$. As we shall  see later, such conditions  influence  the behaviour of the transition densities for the process $X,$ denoted throughout the paper by $p_t(x).$ To begin with, we will assume that  $\Psi(x)=\Psi(|x|)\geq 0$ is a  non-decreasing function on $[0,\infty)$. 
In addition, we impose the following  (crucial) assumptions: There exists $\ell \in (0,\infty)$ such that
\begin{align}
\lim\limits_{x\rightarrow 0+}\frac{ \Psi(x)}{x^2}&=\ell,\label{ellandphi}\\
\lim\limits_{x\rightarrow \infty}\frac{ \Psi(x)}{\ln(\abs{x}+1)}&=\infty,\label{lim2}\\ 
\int_{\R}\frac{dx}{1+\Psi(x)}&<\infty. \label{Elt}
\end{align}
Thus, as we have previously mentioned, condition \eqref{Elt} implies the existence of  a local time associated with  the underlying process $X$. However, instead of appealing to this probabilistic approach,  we rather turn to the analytic point of view because
of the familiarity of the techniques in analysis. It also helps  us to obtain and handle expressions    for the higher moments of certain random variables involving only   the characteristic exponent and the transition densities of the process $X$.

We now proceed to discuss the main implications of the latter conditions imposed on $\Psi(x)$. We start by observing that
condition \eqref{ellandphi} together with the fact that 
$\Psi(x)\in \R$ implies, by means of the 
L\'evy--Khintchine formula, that
\begin{align}\label{PsiLevym}
\Psi(x)=c\,x^2+\int_{\R-\set{0}}\Pi(ds)\,(1-\cos(xs)),
\end{align}
where $\Pi$ denotes the L\'evy measure of the process $X$ and for some $c\in \R$ (see \cite{Ber} for details). Furthermore, the existence of the  limit given in  \eqref{ellandphi}  is equivalent to the finiteness of the second moment.  In fact,
$$\ell=c+\frac{1}{2}\int_{\R-\set{0}}\, \Pi(ds)\,s^2.$$ 
The aforementioned observation is easily derived from  the representation \eqref{PsiLevym} 
and the following fact, whose  proof may be found in  \cite[p.~132]{Apple}. $\E{X_t^2}<\infty$, for all $t>0$, if and only if 
\begin{align*}\label{Smc}
\int_{\abs{s}\geq 1}\Pi(ds)\,s^2<\infty.
\end{align*} 

It is convenient at this point to introduce some notations and implications  concerning the limit \eqref{ellandphi} which will be needed for the crucial proof of our main result. Let us define, for every $\delta >0,$
\begin{align}
\overline{\ell}(\delta)=\sup \limits_{\abs{x}\leq \delta}\frac{ \Psi(x)}{x^2} \,\,\,\,\,\,\,  \mbox{and} \,\,\,\,\,\,\, 
\underline{\ell}(\delta)=\inf\limits_{\abs{x}\leq \delta}\frac{ \Psi(x)}{x^2}.
\end{align}
It follows trivially from this  definition that 
\begin{align}\label{ine1}
\underline{\ell}(\delta)x^2\leq \Psi(x)\leq \overline{\ell}(\delta)x^2,\,\,\, \abs{x}\leq \delta,
\end{align}
and due to the assumption  \eqref{ellandphi}, we also have
\begin{align}\label{limitc}
\lim\limits_{\tgo{\delta}{0+}}\overline{\ell}(\delta)=\lim\limits_{\tgo{\delta}{0+}}
\underline{\ell}(\delta)=\ell.
\end{align}

Before examining the  remaining assumptions, we provide, in a general context,  some examples of symmetric L\'evy processes satisfying condition \eqref{ellandphi} which are known in the literature as Subordinated Brownian Motions. In the following, $a\wedge b$ will stand for $\min\set{a,b}$. 

\begin{example}\label{e1}
Subordinated Brownian Motions are defined, according to 
\cite{Bog}, as L\'evy processes whose characteristic exponent $\Psi(x)$ can be expressed as 
$$\Psi(x)=\phi(x^2),$$ 
for some Bernstein function $\phi(x)$. That is, 
$\phi:(0,\infty)\rightarrow [0,\infty)$ is a $C^{\infty}$--function that admits  the 
following representation  
\begin{equation}\label{intrep}
\phi(x)=\int_{0}^{\infty}\mu(ds)\left(1-e^{-xs}\right),
\end{equation}
where $\mu$ is a $\sigma$-finite measure on  $(0,\infty)$
satisfying
$\int_{0}^{\infty}\mu(ds)(s\wedge1)<\infty.$ 
In addition, in \cite{Bog} is also shown  that 
 \begin{equation*}
 \lim\limits_{\lambda\rightarrow 0^+}\frac{\phi(\lambda)}{\lambda}= \int_{0}^{\infty}\mu(ds)\,s.
 \end{equation*}
Then, as a result of the last limit, we see that \eqref{ellandphi} holds with
$\ell=\int_{0}^{\infty}\mu(ds)\,s,$ provided the integral is positive and finite.
\end{example}

We next elaborate on conditions \eqref{lim2} and \eqref{Elt} and how these influence the behaviour of the function $p_t(0)$. Based on Theorem 1 in \cite{Schilling}, the assumption \eqref{lim2} which is known as the Hartman--Wintner condition  guarantees  the existence
of the transition densities $p_t(x)$ and $p_t\in L^{1}(\R)\cap C^{\infty}(\R)$ for all $t>0$. Consequently, as
an application of the Fourier inversion formula we obtain
\begin{equation*}
p_t(x)=(2\pi)^{-1}\int_{\R}d\xi\,e^{\dot{\iota}x\cdot\xi}e^{-t\Psi(\xi)},\,\, x\in \R.
\end{equation*}
From our  assumptions on $\Psi$, we deduce from the above expression that  $p_t(x)$ is a non-negative  radial function and 
\begin{equation}\label{invdens}
p_t(0)=(2\pi)^{-1}\int_{\R}d\xi\,e^{-t\Psi(\xi)}
\end{equation} is a decreasing function of $t$.

Next, the condition \eqref{Elt}  has been  introduced to ensure that for all  $\beta>0$,
\begin{align}\label{fazero}
\int_{0}^{\beta}ds\,p_s(0)<\infty.
\end{align}
The latter fact arises from the following inequality.
\begin{align*}
\int_{\R}\frac{dx}{1+\Psi(x)}=\int_{\R}dx\int_{0}^{\infty}
ds\,e^{-s\pthesis{1+\Psi(x)}}
&\geq \int_{\R}dx\int_{0}^{\beta}
ds\,e^{-s\pthesis{1+\Psi(x)}} \\
&\geq 2\,\pi\,e^{-\beta}\int_{0}^{\beta}ds\, p_{s}(0).
\end{align*}
The last term in the above inequality is obtained by changing the order of integration and using identity \eqref{invdens}.
We also note that for every real number $a$ satisfying $a>\beta$, we have
\begin{align}\label{Ldct}
\mathbbm{1}_{\bks{\beta/a,1}}(s)\cdot p_{as}(0)\leq \mathbbm{1}_{\bks{0,1}}(s)\cdot p_{\beta}(0) \in L^{1}(\R^+),
\end{align}
where in the last expression we have used the fact that 
$p_t(0)$ is a decreasing function.
Thus, because of the identity \eqref{invdens}, we realize that $p_{as}(0)
\rightarrow 0$ as  $a\rightarrow \infty$ so that by combining \eqref{Ldct}, which allows us to apply the Lebesgue Dominated Convergence Theorem and \eqref{fazero}, we conclude that 
\begin{align}\label{asympdensity2}
 \lim\limits_{\tgo{a}{\infty}}\frac{1}{a}\,\int_{0}^{a}ds_1\,p_{s_1}(0)&=
  \lim\limits_{\tgo{a}{\infty}}\frac{1}{a}\int_{0}^{\beta}
  ds_1\,p_{s_1}(0)+\frac{1}{a}\int_{\beta}^{a}ds_1\, p_{s_1}(0) \\
 &=\lim\limits_{\tgo{a}{\infty}}\int_{\beta/a}^{1}ds\,p_{as}(0)=0.  \nonumber
\end{align}

The most relevant L\'evy processes that share all the properties previously mentioned  are the relativistic $\alpha$--stable processes with $1<\alpha<2$ to be introduced in the example below. These processes belong to the class of Subordinated Brownian Motions and play an important role in physics and Schr\"{o}dinger Operator Theory (see \cite{Ryznar, Ryznar2, Ba.Sel, Ba.Sab}).
 
 \begin{example}\label{relat}
 Let $m>0$ and $1<\alpha<2$. The one-dimensional symmetric L\'{e}vy process  with characteristic exponent given by
\begin{align*}
\Psi(x)=(x^2+m^{2/\alpha})^{\alpha/2}-m
\end{align*}
will be denoted by $X^{m,\alpha}=\set{X^{m,\alpha}_t}_{t\geq 0}$. This process is called the  relativistic $\alpha$--stable process of index $m$. It is a well known fact that $\phi(x)=(x+m^{2/\alpha})^{\alpha/2}-m$ is 
a Bernstein function (see \cite{Bog}) with measure $\mu$ as defined in 
example \ref{e1} given by
$$\mu(ds)=\frac{\alpha/2}{\Gamma(1-\alpha/2)}e^{-m^{2/\alpha}\,s}
s^{-1-\alpha/2}ds.$$  
It is a routine exercise to verify that the conditions \eqref{ellandphi} with $\ell= 2^{-1}\,\alpha\, m^{(\alpha-2)/\alpha}$, \eqref{lim2} and \eqref{Elt} hold.
\end{example}

As for the  function $f$, we shall impose  the following conditions.
\begin{enumerate}
\item[$(C0)$]$f\in L^{1}(\R)\cap L^{\infty}(\R)$. 
\item[$(C1)$]$ \wh{f}\in L^{1}(\R)$.
\item[$(C2)$]
\begin{equation}\label{deltacond}
\int_{\R}dx\,\frac{|\wh{f}(x)-\wh{f}(0)|}{x^2}<\infty.
\end{equation}
\end{enumerate}
We remark that condition $(C1)$ immediately implies $$\int_{\abs{x}>M}dx\,\frac{|\wh{f}(x)-\wh{f}(0)|}{x^2}<\infty,$$
for all $M>0$.   Therefore, $(C2)$ will hold as long as the function $x^{-2}|\wh{f}(x)-\wh{f}(0)|$ is integrable in a neighbourhood of zero. 

We denote by $\mathcal{D}$ the set of functions satisfying $(C0)$-$(C2)$. Next, we  exhibit some examples of  functions that belong to $\mathcal{D}$. 

\begin{example} 
A typical element of $\mathcal{D}$ is the Gaussian kernel $f_{1}(x)=\frac{1}{\sqrt{2\pi}}e^{-x^2/2}$ since clearly $\wh{f_{1}}(\xi)=e^{-|\xi|^2/2}$ satisfies $(C1)$.  Regarding  $(C2)$, it suffices to observe that for all $\xi \in \R$
\begin{equation*}
|e^{-|\xi|^2/2}-1|=\abs{ \int_{0}^{|\xi|^2/2}e^{-u}du}\leq \xi^2/2.
\end{equation*}

Another element of $\mathcal{D}$ is the Jackson--Vall\'{e}e--Poussin Kernel
\begin{equation*}
f_2(x)=\frac{12}{\pi}\left(\frac{\sin(x/2)}{x}\right)^4,
\end{equation*}
for which is known (\cite{Akhiezer}, p. 23) that
\begin{equation*}
\wh{f_2}(\xi)= \left\{ \begin{array}{ccc}
        1-\frac{3\xi^2}{2}+\frac{3|\xi|^3}{4} & \mbox{for $|\xi|\leq1$}, \\ \\
        \frac{1}{4}(2-|\xi|)^3 & \mbox{for $1\leq |\xi|\leq 2$},\\ \\
        0 & \mbox{for $|\xi|\geq 2$}.
        \end{array}
        \right.
\end{equation*} 
Clearly, $f_i*f_j$  with $i,j\in\set{1,2}$ are  members of $\mathcal{D}$ as well.
\end{example}

The main results of the paper are the following:
\begin{thm} \label{mainthm}
Let $\set{a(n): n\in \mathbb{N}}$ be any increasing sequence of positive numbers with $\tgo{a(n)}{\infty}$ as $\tgo{n}{\infty}$.
Fix $f\in \mathcal{D}$ and consider $\delta, t>0$.  Set 
\begin{equation*}
I_{n}(t)=\frac{1}{a(n)}\int_{0}^{t^2a^2(n)}ds\,f(X_s). 
\end{equation*}
Then, 
the process $I_{n}(t)$ can be written  as
\begin{equation*}
I^{(1)}_{n,\delta}(t)+\wh{f}(0)I^{(2)}_{n,\delta}(t),
\end{equation*}
where
\begin{enumerate}
\item[$(i)$]
$\tgo{I^{(1)}_{n,\delta}(t)}{0}\,\, \text{in $L^{2}$ as $\tgo{n}{\infty}$}$
and 
\\
\item[$(ii)$] for all $k\in \mathbb{N},$ 
\begin{align}\label{finmom2}
\sup\limits_{n\in \mathbb{N}}\E{\abs{I_{n,\delta}^{(2)}(t)}^{2k}}&\leq \pthesis{\frac{t^{2}}{4\,\underline{\ell}(2k\delta)}}^k  \frac{(2k)!}{k!}.
\end{align}
In particular, it follows that the sequence $\set{\pthesis{I^{(2)}_{n,\delta}(t)}^{k}: n\in  \mathbb{N}}$ is uniformly integrable for all $k$ and $\delta>0$. Furthermore, 
 we have    
\begin{align} 
\lim \limits_{ \tgo{\delta}{0^+}} \varliminf\limits_{ 
\tgo{n}{\infty}}\E{\left(I^{(2)}_{n,\delta}(t)\right)^k}&=
\lim\limits_{ \tgo{\delta}{0^+} }\varlimsup\limits_{\tgo{n}{\infty}}\E{\left(I^{(2)}_{n,\delta}(t)\right)^k}\label{limits}\\ 
&=\pthesis{\frac{t}{2\sqrt{\ell}}}^k
\frac{k!}{\Gamma\pthesis{1+\frac{k}{2}}}. \nonumber
\end{align}
Here, $\ell$ and $\underline{\ell}$ are the constant and the function defined in \eqref{ellandphi} and \eqref{ine1}, respectively.
\end{enumerate}
\end{thm}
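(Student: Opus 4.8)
The plan is to pass to the frequency side and isolate the zero frequency, which is the source of the factor $\wh{f}(0)$. Put $T=t^2a^2(n)$. Since $(C1)$ gives $\wh{f}\in L^1(\R)$, Fourier inversion yields $f(x)=(2\pi)^{-1}\int_{\R}d\xi\,\wh{f}(\xi)e^{\dot{\iota}\xi x}$, whence $I_n(t)=(2\pi a(n))^{-1}\int_{0}^{T}ds\int_{\R}d\xi\,\wh{f}(\xi)e^{\dot{\iota}\xi X_s}$. I would then define
\begin{equation*}
I^{(2)}_{n,\delta}(t)=\frac{1}{2\pi a(n)}\int_{0}^{T}ds\int_{-\delta}^{\delta}d\xi\,e^{\dot{\iota}\xi X_s},
\end{equation*}
a real, mollified occupation quantity at the origin, and set $I^{(1)}_{n,\delta}(t)=I_n(t)-\wh{f}(0)I^{(2)}_{n,\delta}(t)$. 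By construction the integrand of $I^{(1)}_{n,\delta}(t)$ carries the weight $g_\delta(\xi)=\wh{f}(\xi)$ for $\abs{\xi}>\delta$ and $g_\delta(\xi)=\wh{f}(\xi)-\wh{f}(0)$ for $\abs{\xi}\le\delta$, so the large frequencies are governed by $(C1)$ and the small ones by $(C2)$. The computational engine is the multi-time identity for the process: for ordered times $0<s_1<\dots<s_m<T$,
\begin{equation*}
\E{e^{\dot{\iota}\sum_{j}\xi_j X_{s_j}}}=\prod_{l=1}^{m}e^{-(s_l-s_{l-1})\Psi(\eta_l)},\qquad s_0=0,\ \eta_l=\xi_l+\dots+\xi_m,
\end{equation*}
obtained from the independence of increments; it reduces every moment to integrals of $e^{-\tau\Psi}$.

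For $(i)$ I would use the case $m=2$ to write
\begin{equation*}
\E{\abs{I^{(1)}_{n,\delta}(t)}^2}=\frac{1}{(2\pi a(n))^2}\int_{\R}\int_{\R}d\xi\,d\eta\,g_\delta(\xi)\overline{g_\delta(\eta)}\,J(\xi,\eta),
\end{equation*}
where $J(\xi,\eta)=\int_{0}^{T}\int_{0}^{T}\E{e^{\dot{\iota}\xi X_s-\dot{\iota}\eta X_u}}\,ds\,du$ obeys $\abs{J(\xi,\eta)}\le \min\set{T,1/\Psi(\xi-\eta)}\pthesis{1/\Psi(\eta)+1/\Psi(\xi)}$. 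Bounding one factor $g_\delta$ by $\|g_\delta\|_\infty<\infty$ (finite since $f\in L^1$), the $\xi$-integral is controlled by $\int_{\R}\min\set{T,1/\Psi(w)}\,dw$, which by \eqref{ellandphi} is $O(T^{1/2})$ (the integrand equals $T$ on a set of width $\asymp T^{-1/2}$ and is $\asymp 1/\Psi$ outside), while the remaining $\eta$-integral is bounded by $\int_{\R}\abs{g_\delta(\eta)}/\Psi(\eta)\,d\eta<\infty$, the finiteness coming from $(C1)$ on $\abs{\eta}>\delta$ (where $\Psi\ge\Psi(\delta)$) and from $(C2)$ together with \eqref{ine1} on $\abs{\eta}\le\delta$. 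Hence $\E{\abs{I^{(1)}_{n,\delta}(t)}^2}=O(T^{1/2}a(n)^{-2})=O(a(n)^{-1})$ as $T\asymp a^2(n)\to\infty$, which is $(i)$.

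For the bound in $(ii)$ I would expand and symmetrise over the $(2k)!$ orderings of the times, obtaining the nonnegative expression
\begin{equation*}
\E{\abs{I^{(2)}_{n,\delta}(t)}^{2k}}=\frac{(2k)!}{(2\pi a(n))^{2k}}\int_{0<s_1<\dots<s_{2k}<T}d\vec{s}\int_{[-\delta,\delta]^{2k}}d\vec{\xi}\,\prod_{l=1}^{2k}e^{-(s_l-s_{l-1})\Psi(\eta_l)}.
\end{equation*}
Because $\abs{\eta_l}\le 2k\delta$, inequality \eqref{ine1} gives $\Psi(\eta_l)\ge\underline{\ell}(2k\delta)\eta_l^2$; the volume-preserving change $\vec{\xi}\mapsto\vec{\eta}$ followed by enlarging the frequency domain to $\R^{2k}$ and the time domain to the simplex $\set{\tau_l>0,\ \sum_l\tau_l<T}$ (with $\tau_l=s_l-s_{l-1}$) turns the right-hand side into a product of Gaussian integrals $\pthesis{\pi/(\underline{\ell}(2k\delta)\tau_l)}^{1/2}$ against the Dirichlet integral $\int_{\sum\tau_l<T}\prod_l\tau_l^{-1/2}\,d\vec{\tau}=T^{k}\pi^{k}/k!$. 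Collecting constants and using $T^{k}=t^{2k}a^{2k}(n)$ yields exactly the right-hand side of \eqref{finmom2}; since this makes $\set{(I^{(2)}_{n,\delta}(t))^k}$ bounded in $L^2$, uniform integrability follows.

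Finally, for \eqref{limits} the identical computation for the $k$-th moment (now with $\abs{\eta_l}\le k\delta$) gives $\varlimsup_n\E{(I^{(2)}_{n,\delta}(t))^k}\le\pthesis{t/2\sqrt{\underline{\ell}(k\delta)}}^{k}k!/\Gamma(1+k/2)$. For the matching lower bound I would instead use $\Psi(\eta_l)\le\overline{\ell}(k\delta)\eta_l^2$ and keep the frequency cutoff: the image of $[-\delta,\delta]^k$ contains the cube $[-\delta/k,\delta/k]^k$, on which, for large increments $\tau_l$, the Gaussian mass lost to the tails is negligible, while the region where some $\tau_l$ stays bounded contributes only $O(T^{(k-1)/2})=o(T^{k/2})$; the negligibility of these short-increment contributions is exactly what \eqref{fazero} and \eqref{asympdensity2} deliver. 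This produces $\varliminf_n\E{(I^{(2)}_{n,\delta}(t))^k}\ge\pthesis{t/2\sqrt{\overline{\ell}(k\delta)}}^{k}k!/\Gamma(1+k/2)$, and letting $\tgo{\delta}{0^+}$ with \eqref{limitc} squeezes both bounds to the common value in \eqref{limits}. The \emph{main obstacle} is precisely this lower bound: controlling the error from the frequency truncation and the simplex boundary so that neither disturbs the leading $T^{k/2}$ behaviour.
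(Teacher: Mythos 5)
Your proposal is correct, and its skeleton coincides with the paper's: you use exactly the paper's decomposition ($I^{(2)}_{n,\delta}(t)$ is $(2\pi)^{-1}$ times the low-frequency block $\frac{1}{a(n)}\int_0^{t^2a^2(n)}ds\int_{|\xi|\le\delta}d\xi\,e^{\dot{\iota}\xi X_s}$, while $I^{(1)}_{n,\delta}(t)$ carries the weight $\wh{f}$ on $|\xi|>\delta$ and $\wh{f}-\wh{f}(0)$ on $|\xi|\le\delta$), the same increment identity \eqref{incmts}, the same change of variables $\eta_l=\xi_l+\cdots+\xi_k$ with the cube sandwich of \eqref{subsets}, and the same final squeeze $\delta\to 0^+$ via \eqref{limitc}. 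Where you genuinely depart is in how the resulting integrals are evaluated, and both departures work. For part $(i)$, instead of the paper's reduction to the Ces\`aro average of the density, $\frac{1}{a^2(n)}\int_0^{t^2a^2(n)}ds\,p_s(0)\to 0$ (its \eqref{asympdensity2}), you prove the quantitative bound $\int_\R dw\,\min\{T,1/\Psi(w)\}=O(\sqrt{T})$, which gives the explicit rate $\E{|I^{(1)}_{n,\delta}(t)|^2}=O(1/a(n))$; note that the tail of this bound requires \eqref{Elt} (so that $\int_{|w|>\delta_0}dw/\Psi(w)<\infty$), not only \eqref{ellandphi} as you state. For part $(ii)$, instead of the paper's polar-coordinate Lemma \ref{polarcoor} combined with the monotone-function Lemma \ref{basiclem} applied to $H^{(k)}_F$, you integrate the Gaussian bounds over all of $\R^{2k}$ and evaluate $\int_{\tau_l>0,\ \sum_l\tau_l<T}\prod_l\tau_l^{-1/2}\,d\vec{\tau}=\pi^k T^k/k!$ by the Dirichlet formula, which reproduces \eqref{finmom2} with exactly the right constant; and for the lower bound in \eqref{limits}, your splitting of the simplex into the region where all $\tau_l>R$ (main term) plus a remainder of size $O(\sqrt{R}\,T^{(k-1)/2})=o(T^{k/2})$ is a clean elementary substitute for Lemma \ref{basiclem}. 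One attribution is wrong, though it does not damage the proof: the negligibility of the short-increment region has nothing to do with \eqref{fazero} or \eqref{asympdensity2} --- those concern $p_s(0)$, i.e.\ integrals of $e^{-s\Psi}$ over \emph{all} frequencies --- whereas after your cutoff to the cube $[-\delta/k,\delta/k]^k$ it follows from the Dirichlet formula with one variable confined to $[0,R]$, using nothing about $\Psi$ beyond \eqref{ine1}. With that attribution corrected, your argument is complete and yields the same constants as the paper in \eqref{finmom2} and \eqref{limits}.
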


We remark that the goal of  Theorem $\ref{mainthm}$  is to identify the weak limit of the   random variable $I_n(t)$. Hence, when $\wh{f}(0)=0$,  $I_n(t)$ converges weakly to zero by recalling that convergence in $L^{p}(\mathbbm{P})$, $p\geq1$ implies weak convergence.  On the other hand, the most interesting case takes place when $\wh{f}(0)\neq 0$. 
 In the following theorem, we  identity the weak limit of $I_{n}(t)$  by imposing a supplementary condition on the sequence $\set{a(n):n\in \mathbb{N}}$.

\begin{thm}\label{weaklim}
Consider all the assumptions of Theorem \ref{mainthm} with $\wh{f}(0)\neq 0$ and  the following additional condition on the sequence  $\set{a(n):n\in \mathbbm{N}}:$ 
\begin{align}\label{seqcond}
\lim\limits_{\tgo{n}{\infty}}\frac{a(n)}{a(n+N)}=1,
\end{align}
for all $N\in \mathbbm{N}$. Then, there exists a random variable
$I(t)$ such that
\begin{enumerate}
\item[$(i)$] $I_{n}(t) \overset{\mathcal{L}}{\longrightarrow}\wh{f}(0)\,I(t),$ as $\tgo{n}{\infty}$
and 
\\
\item[$(ii)$]
for all  $k\in \mathbb{N}$,  $I(t)\in L^{k}$  and
$$\E{I^k(t)}=
\pthesis{\frac{t}{2\sqrt{\ell}}}^k\,\frac{k!}{\Gamma\pthesis{1+\frac{k}{2}}}.$$
In particular, it follows that $I(t)$ is uniquely determined by its moments since  Carleman's condition \eqref{CC} holds with the aid of the Stirling's formula.
\end{enumerate}
 \end{thm}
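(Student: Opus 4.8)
The plan is to prove Theorem \ref{weaklim} by the method of moments, feeding the $\delta$-dependent decomposition of Theorem \ref{mainthm} into a squeeze argument in the auxiliary parameter $\delta$. Fix $t>0$ and write $m_k=\pthesis{\frac{t}{2\sqrt{\ell}}}^{k}\frac{k!}{\Gamma\pthesis{1+k/2}}$ for the target moments. The aim is to show that \emph{every} distributional limit point of $\set{I_n(t)}$ carries the moment sequence $\set{\pthesis{\wh{f}(0)}^k m_k}$, that this sequence is determinate, and hence that $I_n(t)$ converges weakly to a variable of the form $\wh{f}(0)\,I(t)$ with $\E{I^k(t)}=m_k$.

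First I would record two consequences of Theorem \ref{mainthm}, valid for each fixed $\delta>0$. Taking $k=1$ in the bound \eqref{finmom2}, the family $\set{I^{(2)}_{n,\delta}(t)}_n$ is bounded in $L^2$, hence tight; and by part $(i)$, $I^{(1)}_{n,\delta}(t)\to 0$ in $L^2$, hence in probability. Therefore $I_n(t)=I^{(1)}_{n,\delta}(t)+\wh{f}(0)\,I^{(2)}_{n,\delta}(t)$ is the sum of a null sequence and a tight sequence, so $\set{I_n(t)}_n$ is itself tight; moreover, by Slutsky's theorem, for every fixed $\delta$ the sequences $I_n(t)$ and $\wh{f}(0)\,I^{(2)}_{n,\delta}(t)$ have the same distributional limit points.

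The heart of the argument is identifying those limit points. Extract a subsequence along which $I_{n_j}(t)\overset{\mathcal{L}}{\longrightarrow}Y$. For each fixed $\delta$ the asymptotic equivalence above gives $\wh{f}(0)\,I^{(2)}_{n_j,\delta}(t)\overset{\mathcal{L}}{\longrightarrow}Y$, and since \eqref{finmom2} makes the $k$-th powers uniformly integrable (they are bounded in $L^2$), weak convergence upgrades to convergence of moments:
\[
\lim_{j}\E{\pthesis{I^{(2)}_{n_j,\delta}(t)}^k}=\pthesis{\wh{f}(0)}^{-k}\E{Y^k}.
\]
The left-hand side is a subsequential limit of the full sequence, so it lies between $\varliminf_n$ and $\varlimsup_n$ of $\E{\pthesis{I^{(2)}_{n,\delta}(t)}^k}$. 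The crucial observation is that the number $\pthesis{\wh{f}(0)}^{-k}\E{Y^k}$ does \emph{not} depend on $\delta$, whereas by \eqref{limits} both the $\varliminf$ and the $\varlimsup$ tend to $m_k$ as $\delta\to 0^+$. Letting $\delta\to0^+$ squeezes this fixed number between two quantities tending to $m_k$, forcing $\E{Y^k}=\pthesis{\wh{f}(0)}^k m_k$ for every $k$. (Condition \eqref{seqcond} is what one invokes in the alternative, computational route, namely to show directly from the moment formulas behind Theorem \ref{mainthm} that $\lim_n\E{(I^{(2)}_{n,\delta})^k}$ exists for each fixed $\delta$; the subsequence formulation used here needs only the double limit \eqref{limits}.)

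Finally I would close the argument. The sequence $\set{\pthesis{\wh{f}(0)}^k m_k}$ satisfies Carleman's condition \eqref{CC}: by Stirling's formula $m_{2k}^{1/2k}\sim c\sqrt{k}$, whence $\sum_k\pthesis{\pthesis{\wh{f}(0)}^{2k}m_{2k}}^{-1/2k}=\infty$, so this moment sequence determines a unique (necessarily nonnegative-scaled) law. Since every distributional limit point $Y$ of the tight sequence $\set{I_n(t)}$ shares this determinate moment sequence, all limit points coincide, and tightness then yields $I_n(t)\overset{\mathcal{L}}{\longrightarrow}\wh{f}(0)\,I(t)$, where $I(t)$ is the variable with $\E{I^k(t)}=m_k$; the finiteness of these moments gives $I(t)\in L^k$ for all $k$, proving $(i)$ and $(ii)$. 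The main obstacle is conceptual rather than computational: the decomposition and all moment data are indexed by the auxiliary parameter $\delta$, while the object to be shown convergent, $I_n(t)$, is $\delta$-free, and reconciling the two is exactly what the squeeze between the $\varliminf$ and $\varlimsup$ in \eqref{limits} accomplishes.
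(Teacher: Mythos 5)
Your proposal is correct, but it takes a genuinely different route from the paper's proof. The paper uses the extra hypothesis \eqref{seqcond} in an essential way: by a direct second--moment computation on the increments $F_{n+N}(\delta,t)-F_n(\delta,t)$ it shows that $\set{I^{(2)}_{n,\delta}(t)}_n$ is Cauchy in $L^2$, calls the $L^2$--limit $I_{\delta}(t)$, uses uniform integrability to get convergence of all moments, applies Slutsky to obtain $I_n(t)\overset{\mathcal{L}}{\longrightarrow}\wh{f}(0)\,I_{\delta}(t)$ for each fixed $\delta$, and then observes that the law of $I_{\delta}(t)$ cannot depend on $\delta$, so that \eqref{limits} identifies the common moments; Carleman's condition appears only as a closing remark justifying the determinacy claim in part $(ii)$. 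You instead run a compactness argument: tightness from the uniform $L^2$ bound in \eqref{finmom2}, extraction of a weakly convergent subsequence, identification of the moments of \emph{every} limit point by squeezing the $\delta$--independent quantity $\pthesis{\wh{f}(0)}^{-k}\E{Y^k}$ between the $\varliminf$ and $\varlimsup$ in \eqref{limits}, and finally Carleman determinacy to force all limit points to coincide. The trade-off is instructive: in your argument Carleman's condition is load-bearing (without it distinct limit points could share the moment sequence), whereas \eqref{seqcond} is never invoked, so your route in fact establishes the weak convergence without that hypothesis on $\set{a(n)}$; what the paper's route buys in exchange is a constructive limit --- $I^{(2)}_{n,\delta}(t)$ converges to $I_{\delta}(t)$ in $L^2$, realized on the same probability space --- rather than convergence in law alone, and it never needs to argue about determinacy to conclude convergence.
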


\begin{cor}\label{maincor}
 Under the assumptions of Theorem \ref{mainthm}, we have
\begin{align}\label{firstlim}
\,\,\,\,\,\,\,\,\,\,\,\,\,\, \lim\limits_{\tgo{n}{\infty}}\E{\frac{1}{a(n)}\int_{0}^{t^2a^2(n)}ds\,f(X_s)}&=\left(\int_{\R}dx\,f(x)\right)
\frac{t}{\sqrt{\pi \ell}}, 
\end{align}
\begin{align}\label{seclim}
\lim\limits_{\tgo{n}{\infty}}\E{\frac{1}
{a^2(n)}\left(\int_{0}^{t^2a^2(n)}ds\,f(X_s)\right)^2}&=
\left(\int_{\R}dx\,f(x)
\right)^2\, \frac{t^2}{2\, \ell}.
\end{align}
\end{cor}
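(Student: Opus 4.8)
The plan is to read off both limits directly from the decomposition $I_n(t)=I^{(1)}_{n,\delta}(t)+\wh{f}(0)I^{(2)}_{n,\delta}(t)$ supplied by Theorem \ref{mainthm}, reducing \eqref{firstlim} to the case $k=1$ of the moment asymptotics \eqref{limits} and \eqref{seclim} to the case $k=2$. Note first that $\wh{f}(0)=\int_{\R}dx\,f(x)$, so the prefactors $\int_{\R}dx\,f(x)$ and $\pthesis{\int_{\R}dx\,f(x)}^2$ on the right-hand sides are precisely $\wh{f}(0)$ and $\wh{f}(0)^2$. The governing subtlety is that $\E{I_n(t)}$ and $\E{I_n^2(t)}$ carry no dependence on $\delta$, whereas the two pieces of the decomposition do; I would therefore pass to $\varliminf_n$ and $\varlimsup_n$ for a fixed $\delta$ and only afterwards send $\tgo{\delta}{0^+}$, so that \eqref{limits} can squeeze these one-sided limits to a common value.

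For \eqref{firstlim}, take expectations in the decomposition:
$$\E{I_n(t)}=\E{I^{(1)}_{n,\delta}(t)}+\wh{f}(0)\,\E{I^{(2)}_{n,\delta}(t)}.$$
By part $(i)$ of Theorem \ref{mainthm}, $\abs{\E{I^{(1)}_{n,\delta}(t)}}\le \E{\abs{I^{(1)}_{n,\delta}(t)}}\le\|I^{(1)}_{n,\delta}(t)\|_{L^2}\to 0$ as $\tgo{n}{\infty}$. Hence, for each fixed $\delta$, the $\delta$-free quantities $\varliminf_n\E{I_n(t)}$ and $\varlimsup_n\E{I_n(t)}$ equal $\wh{f}(0)$ times $\varliminf_n\E{I^{(2)}_{n,\delta}(t)}$ and $\varlimsup_n\E{I^{(2)}_{n,\delta}(t)}$ (with the two interchanged if $\wh{f}(0)<0$). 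Letting $\tgo{\delta}{0^+}$ and invoking \eqref{limits} with $k=1$, where $\Gamma(3/2)=\sqrt{\pi}/2$ gives $\pthesis{t/2\sqrt{\ell}}\,1!/\Gamma(3/2)=t/\sqrt{\pi\ell}$, both one-sided limits collapse to $\wh{f}(0)\,t/\sqrt{\pi\ell}$, which is \eqref{firstlim}.

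For \eqref{seclim}, square the decomposition and take expectations:
$$\E{I_n^2(t)}=\E{\pthesis{I^{(1)}_{n,\delta}(t)}^2}+2\,\wh{f}(0)\,\E{I^{(1)}_{n,\delta}(t)\,I^{(2)}_{n,\delta}(t)}+\wh{f}(0)^2\,\E{\pthesis{I^{(2)}_{n,\delta}(t)}^2}.$$
The first term tends to $0$ by part $(i)$. The cross term is handled by Cauchy--Schwarz, $\abs{\E{I^{(1)}_{n,\delta}(t)\,I^{(2)}_{n,\delta}(t)}}\le\|I^{(1)}_{n,\delta}(t)\|_{L^2}\|I^{(2)}_{n,\delta}(t)\|_{L^2}$, and also tends to $0$ since $\|I^{(1)}_{n,\delta}(t)\|_{L^2}\to 0$ while $\|I^{(2)}_{n,\delta}(t)\|_{L^2}$ is bounded uniformly in $n$ by the $k=1$ instance of \eqref{finmom2}. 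Consequently, for each fixed $\delta$, $\varliminf_n\E{I_n^2(t)}=\wh{f}(0)^2\,\varliminf_n\E{\pthesis{I^{(2)}_{n,\delta}(t)}^2}$, and likewise for $\varlimsup_n$; the nonnegative factor $\wh{f}(0)^2$ removes any sign ambiguity. Sending $\tgo{\delta}{0^+}$ and using \eqref{limits} with $k=2$, where $\pthesis{t/2\sqrt{\ell}}^2\,2!/\Gamma(2)=t^2/2\ell$, yields \eqref{seclim}.

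The only genuine care lies in the order of limits: since $\E{I_n(t)}$ and $\E{I_n^2(t)}$ do not depend on $\delta$, one cannot fix $\delta$ and simply send $\tgo{n}{\infty}$, because Theorem \ref{mainthm} does not assert that the single limit in $n$ exists for fixed $\delta$. Working with $\varliminf_n$ and $\varlimsup_n$ first, and then using \eqref{limits} to force them to a common value as $\tgo{\delta}{0^+}$, is what simultaneously proves existence of the $n$-limit and evaluates it. The remaining points—checking that the cross term vanishes and computing the two Gamma factors—are routine.
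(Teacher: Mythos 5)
Your proposal is correct and follows essentially the same route as the paper: both rest on the decomposition of Theorem \ref{mainthm}, eliminate the $I^{(1)}_{n,\delta}(t)$ contribution via part $(i)$, and use the order of limits $\varliminf_n/\varlimsup_n$ first and $\tgo{\delta}{0^+}$ afterwards so that \eqref{limits} with $k=1,2$ pins down the $\delta$-free quantities. The only cosmetic difference is that the paper proves \eqref{seclim} by the triangle inequality for the $L^2$ norm of $I_n(t)$ (and leaves \eqref{firstlim} to the reader), whereas you expand $\E{I_n^2(t)}$ and bound the cross term by Cauchy--Schwarz together with the $k=1$ case of \eqref{finmom2}; these manipulations are equivalent.
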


The paper is organized as follows. In \S \ref{sec:prelim},  we give some notations and  state a basic identity (Lemma \ref{polarcoor}) to be used  later on.  In \S\ref{sec:props}, we give  the 
proof of  Theorem \ref{mainthm} as a consequence of a series of propositions. Finally, the corresponding proofs of Theorem \ref{weaklim} and Corollary \ref{maincor} are  given in \S \ref{sec:maincor}. 

\section{Notation and preliminaries}\label{sec:prelim}
Let $k\geq 1$ be an integer. We will use $dx^{(k)}$ to denote integration with respect to the Lebesgue measure in $\R^k$ and $dx_i$ to denote integration in $\R$ so that 
 $$dx^{(k)} =dx_1dx_2...dx_k.$$

Also for any $L>0$, we define
$$D_k(L)=\set{(s_1,s_2,...,s_k)\in [0,L]^k: s_0=0< s_1< s_2<...< s_k< L}.$$

Henceforth, we will employ the following basic identity (see \cite{Tiber}) which holds for every integrable function $V$ over $[0,L]$. 
\begin{equation}\label{symmetric}
\left(\int_{0}^{L}ds\,V(s)\right)^k= k!\,\int_{D_k(L)}
ds^{(k)}\,\myprod{i}{1}{k}V(s_i). 
\end{equation}
We also observe that for $0 = s_0 < s_1 <\dots  <s_k < \infty$ and $x_1 , . . . , x_k \in \R$, we have
\begin{align}\label{incmts}
\E{\exp\left(\dot{\iota} \mysum{j}{1}{k}x_j\,X_{s_j}\right)}=\exp\left(-\mysum{i}{1}{k}\Psi\left(\mysum{j}{i}{k}x_j\right)(s_i-s_{i-1})\right).
\end{align}
The last displayed equality is a consequence of the independence of increments of the process $X$.
 
  In what  follows, $\mathbb{S}^{k-1}$ will be used to denote the sphere in $\R^k$ with surface area  
  $$|\mathbb{S}^{k-1}|=\frac{2\,\pi^{k/2}}{\Gamma(k/2)}.$$
   In addition, we set 
\begin{align*}
\mathbb{S}^{k-1}_+&=\mathbb{S}^{k-1}
\cap\set{(x_1,...,x_k):x_i>0 \,\, \mbox{for all} \,\, i\in\set{1,...k}}. 
\end{align*}

Consider $\mathcal{H}^{k-1}$  the Hausdorff measure in $\R^{k-1}$ and define for a bounded and non-decreasing function $F:[0,\infty) \rightarrow [0,\infty) $,
\begin{equation}\label{Hdefinition}
H_{F}^{(k)}(r,\xi)=\left\{
\begin{array}{cc}
F(r\,\xi) & \mbox{if \,$k=1$}, \\
\int_{\mathbb{S}^{k-1}_+}\,\mathcal{H}^{k-1}(dz^{(k)}) \,\myprod{i}{1}{k}\,F(\xi \,r\, z_i)& 
\mbox{if \,$k\geq 2$},
\end{array}
\right.
\end{equation} 
for  any $r,\xi\geq0.$
This function  has the following properties: 
\begin{enumerate}
\item[$(i)$] Fix $\xi\geq 0$. Then, $H_{F}^{(k)}(r,\xi)$ is a non--decreasing function with respect to the variable $r$ since $F$ is non-decreasing by assumption.
\item[$(ii)$]If $F(\lambda)=1$ for all $\lambda\geq 0$, then
\begin{align*}
H_F^{(k)}(r,\xi)=
\left\{
\begin{array}{cc}
1 & \mbox{if \,$k=1$}, \\
|\mathbb{S}^{k-1}_+|=2^{-k}|\mathbb{S}^{k-1}| & 
\mbox{if \,$k\geq 2$}.
\end{array}
\right.
\end{align*}
\item[$(iii)$] An application of either the Monotone Convergence Theorem ($F$ non--decreasing) or the Lebesgue Dominated Convergence Theorem ($F$ bounded and (ii) above) shows that for any
sequence $\set{r_{n}:n\in \mathbb{N}}$ with $\tgo{r_n}{\infty}$ as $\tgo{n}{\infty}$, 
\begin{equation}\label{limit}
\lim\limits_{\tgo{n}{\infty}}H_F^{(k)}(r_n,\xi)=
\left\{
\begin{array}{cc}
||F||_{\infty} & \mbox{if \,$k=1$}, \\
\\
2^{-k}\,||F||^k_{\infty}\,|\mathbb{S}^{k-1}| & 
\mbox{if \,$k\geq 2$},
\end{array}
\right.
\end{equation}
\end{enumerate}
for $\xi\geq0$ fixed (notice that when $\xi=0$, we have to replace $||F||_{\infty}$ with $F(0)$).
This shows  that 
$\lim\limits_{\tgo{r}{\infty}}H_F^{(k)}(r,\xi)$ exists and 
is given by the right hand side of \eqref{limit}.

\begin{lem} \label{polarcoor}For any $L,\xi>0$ and $k\in \mathbb{N}$, we have
\begin{equation*}
\int_{D_k(L)}ds^{(k)}\myprod{i}{1}{k}\frac{F\left(\xi\,\sqrt{s_i-s_{i-1}}\right)}{\sqrt{s_i-s_{i-1}}}=2^k\int_{0}^{\sqrt{L}}dr\,\,r^{k-1}H_F^{(k)}(r,\xi).
\end{equation*}
\end{lem}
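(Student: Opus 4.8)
The plan is to reduce the left-hand side to an ordinary Lebesgue integral over a portion of a Euclidean ball and then invoke polar coordinates. First I would make the triangular substitution $u_i = s_i - s_{i-1}$ for $i=1,\ldots,k$ (recall $s_0=0$), whose Jacobian equals $1$. Under this map the ordered region $D_k(L)$ corresponds bijectively to the simplex $\set{(u_1,\ldots,u_k): u_i>0,\ \sum_{i=1}^k u_i < L}$, since the chain $0<s_1<\cdots<s_k<L$ is equivalent to $u_i>0$ for every $i$ together with $u_1+\cdots+u_k<L$. Hence the left-hand side becomes the integral of $\myprod{i}{1}{k}\frac{F(\xi\sqrt{u_i})}{\sqrt{u_i}}$ over that simplex.

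Next I would substitute $w_i=\sqrt{u_i}$, so that $u_i=w_i^2$ and $du_i = 2w_i\,dw_i$; the factor $\frac{1}{\sqrt{u_i}}\,du_i$ collapses to $2\,dw_i$, producing an overall constant $2^k$. The constraint $\sum_i u_i<L$ turns into $\sum_i w_i^2<L$ with all $w_i>0$, i.e. the intersection of the open ball of radius $\sqrt{L}$ with the positive orthant. Thus the integral equals $2^k\int_{\{w_i>0,\ |w|<\sqrt{L}\}} dw^{(k)}\,\myprod{i}{1}{k}F(\xi w_i)$.

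Finally I would pass to polar coordinates $w=r z$ with $r=|w|\in(0,\sqrt{L})$ and $z\in\mathbb{S}^{k-1}_+$, using the decomposition $dw^{(k)} = r^{k-1}\,dr\,\mathcal{H}^{k-1}(dz^{(k)})$, where $\mathcal{H}^{k-1}$ restricted to the unit sphere is precisely the surface measure appearing in that decomposition. Since $w_i = r z_i$, the integrand factors as $\myprod{i}{1}{k} F(\xi r z_i)$, and integrating out the angular variable over $\mathbb{S}^{k-1}_+$ reproduces $H_F^{(k)}(r,\xi)$ by its very definition; the remaining radial integral is $2^k\int_0^{\sqrt{L}} r^{k-1} H_F^{(k)}(r,\xi)\,dr$, as claimed. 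Throughout, all integrands are nonnegative and $F$ is bounded, so Tonelli's theorem legitimizes each change of variables. The case $k=1$ is simply the one-variable substitution $r=\sqrt{s_1}$ and matches the definition $H_F^{(1)}(r,\xi)=F(r\xi)$ directly.

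The only delicate point is the third step: one must check that restricting to the positive orthant is compatible with the polar factorization, i.e. that integrating the angular factor over $\mathbb{S}^{k-1}_+$ (rather than all of $\mathbb{S}^{k-1}$) is exactly what the definition of $H_F^{(k)}$ prescribes, and that the Hausdorff measure $\mathcal{H}^{k-1}$ coincides with the usual spherical surface measure so that the Jacobian $r^{k-1}$ is correct. Everything else is routine bookkeeping with unit Jacobians and the elementary one-variable substitution.
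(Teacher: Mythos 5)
Your proof is correct and follows essentially the same route as the paper: the paper performs your two substitutions in one step, via $u_i=\sqrt{s_i-s_{i-1}}$ with Jacobian $2^k\prod_{i=1}^k u_i$, mapping $D_k(L)$ onto the positive-orthant portion of the ball of radius $\sqrt{L}$, and then passes to polar coordinates exactly as you do. Your version merely splits the change of variables into two stages and spells out the polar-coordinate bookkeeping in more detail.
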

\begin{proof}
The change of variables $u_i=\sqrt{s_i-s_{i-1}}$ transforms $D_{k}(L)$ into
$$\set{(u_1,u_2,...,u_k)\in \R^k: u_i> 0, \mysum{i}{1}{k}u_i^2< L},$$ with Jacobian satisfying
$$\frac{\partial(s_1,...,s_k)}{\partial(u_1,..,u_k)}=2^k\myprod{i}{1}{k}u_i.$$
Consequently, changing to polar coordinates yields the desired result.
\end{proof}

\section{Proof of Theorem \ref{mainthm}}\label{sec:props}
The proof consists of several steps. We begin with the observation that  the  Fourier inversion formula can be applied to any function  $f\in\mathcal{D}$ to conclude that 
\begin{equation}
\int_{0}^{t^2a^2(n)}ds\,f(X_s)=(2\pi)^{-1}\int_{0}^{t^2a^2(n)}ds
\int_{\R}dx\,\wh{f}(x)\,e^{\dot{\iota}x\cdot X_s}.
\end{equation}

\begin{prop}\label{P1}
Let $\delta>0$ and set
\begin{equation*}
F_{n,1}(\delta,t)=\frac{1}{a(n)}\int_{0}^{t^2a^2(n)}ds\int_{|x|>\delta}dx\,\wh{f}(x)\,e^{\dot{\iota}x\cdot X_s}.
\end{equation*}
Then, $\tgo{F_{n,1}(\delta,t)}{0}$ in $L^2$, as $\tgo{n}{\infty}.$ 
\end{prop}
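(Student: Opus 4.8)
The plan is to estimate the second moment $\E{\abs{F_{n,1}(\delta,t)}^2}$ directly and show it tends to $0$. Writing $T_n=t^2a^2(n)$ and expanding the modulus squared as $F_{n,1}\overline{F_{n,1}}$, Fubini's theorem (legitimate since $\int_0^{T_n}\!\int_{\abs{x}>\delta}\abs{\wh{f}(x)}\,dx\,ds\leq T_n\,\|\wh{f}\|_{1}<\infty$ and $\abs{e^{\dot{\iota}x\cdot X_s}}=1$) gives
\begin{equation*}
\E{\abs{F_{n,1}(\delta,t)}^2}=\frac{1}{a^2(n)}\int_0^{T_n}\!\!\int_0^{T_n}ds\,ds'\int_{\abs{x}>\delta}\int_{\abs{y}>\delta}dx\,dy\,\wh{f}(x)\,\overline{\wh{f}(y)}\,\E{e^{\dot{\iota}(x X_s-y X_{s'})}}.
\end{equation*}
First I would evaluate the joint characteristic function by the independence of increments as in \eqref{incmts}: for $s'\leq s$ this yields $\E{e^{\dot{\iota}(x X_s-y X_{s'})}}=e^{-s'\Psi(x-y)}e^{-(s-s')\Psi(x)}$, and a symmetric expression for $s\leq s'$, both real and positive. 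Since these factors carry the only cancellation I am discarding, bounding $\abs{\wh{f}(x)\,\overline{\wh{f}(y)}}\leq\abs{\wh{f}(x)}\,\abs{\wh{f}(y)}$ and using that $\Psi$ is even together with the symmetry of the domain under $x\leftrightarrow y$, I reduce matters to
\begin{equation*}
\E{\abs{F_{n,1}(\delta,t)}^2}\leq\frac{2}{a^2(n)}\int_{\abs{x}>\delta}\int_{\abs{y}>\delta}dx\,dy\,\abs{\wh{f}(x)}\,\abs{\wh{f}(y)}\,J_n(x,y),\qquad J_n(x,y)=\int_{0<s'<s<T_n}ds\,ds'\,e^{-s'\Psi(x-y)}e^{-(s-s')\Psi(x)}.
\end{equation*}

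The crux is to estimate $J_n(x,y)$ in two complementary ways. Dropping $e^{-s'\Psi(x-y)}\leq1$ and integrating gives the crude bound $J_n(x,y)\leq T_n/\Psi(x)$, whereas extending the time integral to $(0,\infty)^2$ and substituting $u=s'$, $v=s-s'$ yields $J_n(x,y)\leq\big(\Psi(x)\,\Psi(x-y)\big)^{-1}$ for $x\neq y$. Observe that \eqref{ellandphi} forces $\Psi(x)>0$ for every $x\neq0$, so by monotonicity $\Psi(x)\geq\Psi(\delta)>0$ on $\set{\abs{x}>\delta}$; write $c_\delta=\Psi(\delta)$. The main obstacle is that the second bound carries the diagonal singularity $\Psi(x-y)^{-1}\sim\big(\ell(x-y)^2\big)^{-1}$, which is \emph{not} integrable across $x=y$, so neither estimate works alone: the first is harmless near the diagonal but of size $T_n/a^2(n)=t^2$ (no decay in $n$), while the second decays like $a^{-2}(n)$ yet blows up on the diagonal.

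To reconcile the two I fix $\eta>0$ and split the $(x,y)$–integral according to whether $\abs{x-y}\geq\eta$ or $\abs{x-y}<\eta$. On the off-diagonal piece I apply the second bound together with $\Psi(x-y)\geq\Psi(\eta)>0$, producing a contribution at most $\frac{2}{a^2(n)}\,\frac{\|\wh{f}\|_{1}^2}{c_\delta\,\Psi(\eta)}$, which tends to $0$ as $\tgo{n}{\infty}$ for each fixed $\eta$. On the near-diagonal piece I apply $J_n\leq T_n/\Psi(x)\leq T_n/c_\delta$; since $T_n/a^2(n)=t^2$, this contribution is bounded, uniformly in $n$, by
\begin{equation*}
\frac{2t^2}{c_\delta}\int_{\R}\abs{\wh{f}(x)}\left(\int_{\abs{y-x}<\eta}\abs{\wh{f}(y)}\,dy\right)dx,
\end{equation*}
which, by the absolute continuity of the integral of $\abs{\wh{f}}\in L^1(\R)$ (condition $(C1)$) and dominated convergence with majorant $\|\wh{f}\|_{1}\,\abs{\wh{f}(x)}$, tends to $0$ as $\tgo{\eta}{0^+}$. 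Thus, given $\varepsilon>0$, I first choose $\eta$ so that the near-diagonal term is below $\varepsilon$ uniformly in $n$, and then let $\tgo{n}{\infty}$ to annihilate the off-diagonal term; this gives $\varlimsup_{n\to\infty}\E{\abs{F_{n,1}(\delta,t)}^2}\leq\varepsilon$ for every $\varepsilon>0$, whence $\tgo{F_{n,1}(\delta,t)}{0}$ in $L^2$. The only real difficulty is the diagonal singularity, which the two-scale split is designed to absorb.
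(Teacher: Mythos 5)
Your proof is correct, but it takes a genuinely different route from the paper's. Both arguments start identically — expand $\E{\abs{F_{n,1}(\delta,t)}^2}$ via \eqref{incmts} and use $\Psi(x)\geq\Psi(\delta)>0$ to kill one time integral — but they diverge at the key difficulty, namely the region where the "sum" frequency (your $x-y$, the paper's $y_1$) is near zero and $\Psi$ degenerates. The paper keeps the factor $e^{-\Psi(y_1)s_1}$, bounds the second $\wh{f}$ factor by $\|\wh{f}\|_{\infty}$, integrates over \emph{all} $y_1\in\R$ to recognize $2\pi p_{s_1}(0)$ via \eqref{invdens}, and then invokes the Ces\`aro-type limit \eqref{asympdensity2}; this leans on the transition-density machinery, hence on the Hartman--Wintner condition \eqref{lim2} and the integrability condition \eqref{Elt}. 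You instead never touch the densities: you play the crude time bound $J_n\leq T_n/\Psi(x)$ (harmless near the diagonal because $T_n/a^2(n)=t^2$ is exactly bounded) against the full time bound $J_n\leq\pthesis{\Psi(x)\Psi(x-y)}^{-1}$ (which decays like $a^{-2}(n)$ off the diagonal), splitting at scale $\eta$ and closing with the uniform absolute continuity of $\int_{\abs{y-x}<\eta}\abs{\wh{f}}$. What each buys: the paper's argument is shorter given that \eqref{asympdensity2} is already established, and that machinery is reused verbatim in the next proposition for $F_{n,2}$; your argument is more self-contained and strictly more general — it needs only that $\Psi$ be radial, non-decreasing and positive off the origin (which follows from \eqref{ellandphi} and monotonicity) together with $\wh{f}\in L^{1}(\R)$, so the conclusion of Proposition \ref{P1} holds without assuming \eqref{lim2} or \eqref{Elt} at all, in the same spirit as the remark the author makes after Proposition \ref{intpropo}. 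The price is that your $\varepsilon$--$\eta$ bookkeeping yields no explicit rate, whereas the paper's bound quantifies the decay by $a^{-2}(n)\int_{0}^{t^{2}a^{2}(n)}ds\,p_{s}(0)$.
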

\begin{proof}
Observe that  \eqref{symmetric} and \eqref{incmts} with $k=2$ gives, after a suitable change of variables, 
\begin{align*}
&\E{\abs{F_{n,1}(\delta,t)}^2}
=\E{F_{n,1}(\delta,t)\overline{F_{n,1}(\delta,t)}}\\
&=\frac{2}{a^2(n)}\int_{0}^{t^2a^2(n)}ds_1\int_{s_1}^{t^2a^2(n)}ds_2
\int_{\substack{\,\,|x_2|>\delta,\\ |x_1|>\delta}}\wh{f}(x_2)\wh{f}(x_1)e^{-
\Psi(x_2)(s_2-s_1)-\Psi(x_2+x_1)s_1}dx^{(2)}\\ \nonumber
&=\frac{2}{a^2(n)}\int_{0}^{t^2a^2(n)}ds_1\int_{s_1}^{t^2a^2(n)}ds_2
\int_{\substack{\,\,\,\,\,\,\,\,\,\,\,|y_2|>\delta,\\  |y_1-y_2|>\delta}}\wh{f}(y_2)\wh{f}(y_1-y_2)e^{-
\Psi(y_2)(s_2-s_1)-\Psi(y_1)s_1}dy^{(2)}.
\end{align*}
Since $\Psi(x)$ is non-decreasing on $[0,\infty)$ and radial,  
for $|y_2|>\delta$ we have that
\begin{equation*}
\int_{s_1}^{t^2a^2(n)}ds_2\,e^{-\Psi(y_2)s_2}\leq e^{-\Psi(y_2)s_1}\, \Psi(y_2)^{-1}
\leq e^{-\Psi(y_2)s_1}\,\Psi(\delta)^{-1}.
\end{equation*}
The latter inequality, $\wh{f}\in L^1(\R)$ and \eqref{invdens} give that

\begin{align*}\label{ineexp1}
&\E{\abs{F_{n,1}(\delta,t)}^2}\\ &\leq\frac{2}{a^2(n)\Psi(\delta)}\int_{0}^{t^2a^2(n)}ds_1
\int_{|y_2|>\delta}dy_2\abs{\wh{f}(y_2)}\int_{|y_1-y_2|>\delta}dy_1\abs{\wh{f}(y_1-y_2)}e^{-\Psi(y_1)s_1}\nonumber \\ 
&\leq\frac{2||\wh{f}||_{\infty}}{a^2(n)\Psi(\delta)}\int_{0}^{t^2a^2(n)}ds_1
\int_{|y_2|>\delta}dy_2\abs{\wh{f}(y_2)}\int_{\R}dy_1\,e^{-\Psi(y_1)s_1}\\\nonumber
&\leq \frac{4\pi||\wh{f}||_{\infty}||\wh{f}||_{1}}{\Psi(\delta)}\left(\frac{1}{a^2(n)}\int_{0}^{t^2a^2(n)}ds_1\,p_{s_1}(0)\right).
\end{align*}

By  \eqref{asympdensity2}, the last term in the above inequality
vanishes as $\tgo{n}{\infty}$ and this completes the proof.
\end{proof}

\begin{prop}
Consider $\delta>0$ and set
\begin{equation*}
F_{n,2}(\delta,t)=\frac{1}{a(n)}\int_{0}^{t^2a^2(n)}ds\int_{|x|\leq\delta}dx\left(\wh{f}(x)-\wh{f}(0)\right)e^{\dot{\iota}x \cdot X_s}.
\end{equation*}
Then, $\tgo{\E{\abs{F_{n,2}(\delta,t)}^2}}{0}$, as $\tgo{n}{\infty}.$
\end{prop}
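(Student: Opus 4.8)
The plan is to mimic the second-moment computation of Proposition \ref{P1}. First I would observe that, since $f$ is real and the frequency region $\set{\abs{x}\leq\delta}$ is symmetric, $F_{n,2}(\delta,t)$ is real-valued, so that $\E{\abs{F_{n,2}(\delta,t)}^2}=\E{F_{n,2}(\delta,t)^2}$. Writing $g(x)=\wh{f}(x)-\wh{f}(0)$ and applying the symmetrization identity \eqref{symmetric} with $k=2$ followed by \eqref{incmts}, I obtain
\[
\E{F_{n,2}(\delta,t)^2}=\frac{2}{a^2(n)}\int_{0}^{t^2a^2(n)}ds_1\int_{s_1}^{t^2a^2(n)}ds_2\int_{\substack{\abs{x_1}\leq\delta\\ \abs{x_2}\leq\delta}}g(x_1)g(x_2)\,e^{-\Psi(x_1+x_2)s_1-\Psi(x_2)(s_2-s_1)}\,dx^{(2)}.
\]
The change of variables $y_2=x_2$, $y_1=x_1+x_2$ (exactly as in Proposition \ref{P1}) turns the exponent into $-\Psi(y_1)s_1-\Psi(y_2)(s_2-s_1)$, turns the integrand into $\abs{g(y_1-y_2)}\,\abs{g(y_2)}$ after passing to absolute values, and turns the region into $\set{\abs{y_2}\leq\delta,\ \abs{y_1-y_2}\leq\delta}$.

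Next I would integrate out $s_2$, using $\int_{s_1}^{t^2a^2(n)}e^{-\Psi(y_2)(s_2-s_1)}ds_2\leq \Psi(y_2)^{-1}$. This produces a factor $1/\Psi(y_2)$ that is singular at $y_2=0$, and this is precisely where condition $(C2)$ enters. Indeed, by the lower bound in \eqref{ine1} we have $\Psi(y_2)^{-1}\leq \underline{\ell}(\delta)^{-1}y_2^{-2}$ on $\set{\abs{y_2}\leq\delta}$, whence
\[
\int_{\abs{y_2}\leq\delta}\frac{\abs{g(y_2)}}{\Psi(y_2)}\,dy_2\leq \frac{1}{\underline{\ell}(\delta)}\int_{\abs{y_2}\leq\delta}\frac{\abs{\wh{f}(y_2)-\wh{f}(0)}}{y_2^2}\,dy_2=:C_\delta<\infty,
\]
the finiteness being exactly $(C2)$ (and $\underline{\ell}(\delta)>0$ because $\Psi$ is strictly positive away from the origin, as it is non-decreasing with $\Psi(x)/x^2\to\ell>0$).

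For the remaining variable I would \emph{not} integrate $s_1$ against $e^{-\Psi(y_1)s_1}$ up to $\infty$, since $\Psi(y_1)^{-1}\sim(\ell y_1^2)^{-1}$ is not integrable near zero in one dimension. Instead, I bound the bounded factor $\abs{g(y_1-y_2)}\leq ||g||_\infty\leq 2||\wh{f}||_\infty$ and extend the $y_1$-integral to all of $\R$, which restores the on-diagonal density through $\int_{\R}e^{-\Psi(y_1)s_1}\,dy_1=2\pi\,p_{s_1}(0)$ via \eqref{invdens}. The two integrals then factor, giving
\[
\E{F_{n,2}(\delta,t)^2}\leq 4\pi\,||g||_\infty\,C_\delta\left(\frac{1}{a^2(n)}\int_{0}^{t^2a^2(n)}ds_1\,p_{s_1}(0)\right),
\]
and since $\tfrac{1}{a^2(n)}=\tfrac{t^2}{t^2a^2(n)}$ with $t^2a^2(n)\to\infty$, the asymptotic \eqref{asympdensity2} forces the right-hand side to vanish as $\tgo{n}{\infty}$.

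The main obstacle is the interplay of the two singularities at the origin: integrating $s_2$ unavoidably creates the non-integrable weight $1/\Psi(y_2)$, and one must resist the symmetric temptation to do the same with $s_1$. The resolution is asymmetric—absorb the $y_2$-singularity using $(C2)$, but keep the $s_1$-integral intact so as to reduce the $y_1$-behaviour to $p_{s_1}(0)$ and invoke \eqref{asympdensity2}. Checking that $\underline{\ell}(\delta)>0$, so that the weight $\Psi^{-1}$ may legitimately be compared with $y_2^{-2}$ and $(C2)$ applied, is the only additional point requiring care.
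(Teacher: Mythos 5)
Your proof is correct and follows essentially the same route as the paper's own argument: symmetrize, apply the increment formula, integrate out $s_2$ to produce the factor $1/\Psi(y_2)$, control that singularity via \eqref{ine1} and condition $(C2)$, extend the $y_1$-integral to all of $\R$ to recover $2\pi p_{s_1}(0)$ through \eqref{invdens}, and conclude with \eqref{asympdensity2}. The only (welcome) additions are your explicit observations that $F_{n,2}(\delta,t)$ is real-valued and that $\underline{\ell}(\delta)>0$, points the paper leaves implicit.
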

\begin{proof}
Recall that by \eqref{ine1}, $\Psi(y_2)\geq y_2^{2}\,\underline{\ell}(\delta)$ for any $|y_2|\leq\delta.$  
Next, by appealing to \eqref{deltacond},  we have by mimicking the proof of the latter  proposition that
\begin{align*}
&\E{\abs{F_{n,2}(\delta,t)}^2}\\ &\leq \frac{2}{a^2(n)}
\int_{0}^{t^2a^2(n)}ds_1
\int_{|y_2|\leq\delta}\int_{|y_1-y_2|\leq\delta}\frac{|\wh{f}(y_2)-\wh{f}(0)|}{\Psi(y_2)}
|\wh{f}(y_1-y_2)-\wh{f}(0)|e^{-\Psi(y_1)s_1}dy^{(2)} \\ \nonumber
&\leq \frac{2}{a^2(n)}
\int_{0}^{t^2a^2(n)}ds_1
\int_{|y_2|\leq\delta}\int_{|y_1-y_2|\leq\delta}\frac{|\wh{f}(y_2)-\wh{f}(0)|}{\underline{\ell}(\delta)\,y_2^2}
|\wh{f}(y_1-y_2)-\wh{f}(0)|e^{-\Psi(y_1)s_1}dy^{(2)} \\ \nonumber
&\leq \frac{8\, \pi \,||\wh{f}||_{\infty}}{\underline{\ell}(\delta)}\left(\int_{\R}dy_2\,\frac{|\wh{f}(y_2)-\wh{f}(0)|}{y_2^2}\right) \left(\frac{1}{a^2(n)}
\int_{0}^{t^2a^2(n)}ds_1\,p_{s_1}(0)\right),
\end{align*} 
where the last term converges to $0$, according to \eqref{asympdensity2}.
\end{proof}

In order to simplify the proof of the next Proposition, we establish the following lemma.
\begin{lem}\label{basiclem}
Let $H:[0,\infty)\rightarrow [0,\infty)$ be an increasing function.
Then, for every  $k>0$ and $0<\varepsilon<1$, the following inequalities hold
\begin{align}\label{Hineq}
\frac{(1-\varepsilon^k)}{k}H(\varepsilon L)\leq \frac{1}{L^k}\int_{0}^{L}\, dr\, r^{k-1}H(r)\leq \frac{1}{k}
\lim\limits_{\tgo{r}{\infty}}H(r), 
\end{align}
for any $L>0.$ In particular,
\begin{align}\label{Hlim}
\lim\limits_{ \tgo{L}{\infty}}\frac{1}{L^k}\int_{0}^{L}\, dr\, r^{k-1}\,H(r)=\frac{1}{k}
\lim\limits_{\tgo{r}{\infty}}H(r).
\end{align}
\end{lem}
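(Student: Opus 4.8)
The plan is to derive the two-sided estimate \eqref{Hineq} directly from the monotonicity of $H$, and then obtain \eqref{Hlim} by sending $L\to\infty$ and afterwards $\varepsilon\to 0^+$. Throughout I write $H(\infty)=\lim_{r\to\infty}H(r)\in[0,\infty]$, which exists because $H$ is increasing, and I abbreviate $A_L=\frac{1}{L^k}\int_0^L dr\, r^{k-1}H(r)$.

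For the upper bound in \eqref{Hineq} I would use $H(r)\le H(\infty)$ for all $r\ge0$, giving
\begin{equation*}
\int_0^L dr\, r^{k-1}H(r)\le H(\infty)\int_0^L dr\, r^{k-1}=H(\infty)\,\frac{L^k}{k},
\end{equation*}
so that $A_L\le H(\infty)/k$. For the lower bound I would restrict the integral to $[\varepsilon L,L]$ and use $H(r)\ge H(\varepsilon L)$ there, namely
\begin{equation*}
\int_0^L dr\, r^{k-1}H(r)\ge H(\varepsilon L)\int_{\varepsilon L}^L dr\, r^{k-1}=H(\varepsilon L)\,\frac{L^k(1-\varepsilon^k)}{k},
\end{equation*}
so that $A_L\ge \frac{1-\varepsilon^k}{k}H(\varepsilon L)$. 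This establishes \eqref{Hineq} for every $L>0$ and $0<\varepsilon<1$.

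For the limit \eqref{Hlim} I would fix $\varepsilon\in(0,1)$ and let $L\to\infty$ in \eqref{Hineq}; since $\varepsilon L\to\infty$ we have $H(\varepsilon L)\to H(\infty)$, whence
\begin{equation*}
\frac{1-\varepsilon^k}{k}\,H(\infty)\le \varliminf_{L\to\infty}A_L\le \varlimsup_{L\to\infty}A_L\le \frac{H(\infty)}{k}.
\end{equation*}
Letting $\varepsilon\to 0^+$ makes $1-\varepsilon^k\to1$, so the two outer terms both tend to $H(\infty)/k$ and the squeeze forces $\lim_{L\to\infty}A_L=H(\infty)/k$, as claimed.

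I do not expect a genuine obstacle here, since the argument is purely monotonicity followed by a squeeze. The only point requiring care is the order of the limits: one must send $L\to\infty$ first, for a fixed $\varepsilon$, so that $H(\varepsilon L)\to H(\infty)$, and only afterwards let $\varepsilon\to0^+$. It is also worth remarking that the bounds remain valid in $[0,\infty]$ when $H$ is unbounded (so $H(\infty)=\infty$), in which case the lower bound alone already forces $A_L\to\infty$; in the intended application $H=H_F^{(k)}(\cdot,\xi)$ from \eqref{Hdefinition} is bounded, so $H(\infty)$ is finite and is given by \eqref{limit}.
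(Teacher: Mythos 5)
Your proposal is correct and follows essentially the same route as the paper's own proof: both derive the lower bound by restricting the integral to $[\varepsilon L, L]$ and using $H(\varepsilon L)\leq H(r)$ there, derive the upper bound from $H(r)\leq \lim_{r\to\infty}H(r)$, and then obtain \eqref{Hlim} by letting $L\to\infty$ first and $\varepsilon\to 0^+$ afterwards. Your write-up is in fact slightly more explicit than the paper's (which leaves the upper bound as ``a similar argument''), and your closing remark about the unbounded case is a harmless, accurate addition.
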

\begin{proof}
By assumption, we have $0\leq H(\varepsilon L) \leq H(r),\,\, \, \varepsilon L<r,$ so that  
\begin{align*} 
 \frac{(1-\varepsilon^k)}{k}H(\varepsilon L)\leq
 \frac{1}{L^k}\int_{\varepsilon L}^{L}\, dr\, r^{k-1}H(r)\leq
 \frac{1}{L^k}\int_{0}^{L}\, dr\, r^{k-1}H(r).
\end{align*}
A similar argument provides the other desired inequality in \eqref{Hineq}. Therefore, using that $k>0$, we conclude \eqref{Hlim} by letting first $L$ go to  $\infty$ and then letting $\varepsilon$ go to zero in \eqref{Hineq}.
\end{proof} 

\begin{prop}\label{intpropo}
Let  $\delta>0$ and consider 
\begin{equation}\label{Fn}
F_n(\delta,t)= \frac{1}{a(n)}\int_{0}^{t^2a^2(n)}ds\int_{|x|\leq\delta}dx\,e^{\dot{\iota}x\cdot X_s}=\frac{2}{a(n)}\int_{0}^{t^2a^2(n)}
\frac{\sin{\left(\delta X_s\right)}}{X_s}ds.
\end{equation}
Then, for all $k \in \mathbb{N}$, we have 
\begin{align}\label{finmom1}
\sup\limits_{n\in \mathbb{N}}\E{F_n^{2k}(\delta,t)}&\leq
 \pthesis{\frac{\,\pi^{2} \,t^{2}}{\underline{\ell}(2k\delta)}}^k
 \frac{(2k)!}{k!}. 
\end{align}
Thus the sequence  $\set{F_n^{k}(\delta,t): n\in\mathbb{N}}$
is uniformly integrable for every $\delta$. Moreover,
\begin{align}\label{finmom}
\lim \limits_{ \tgo{\delta}{0^+}} \varliminf\limits_{ 
\tgo{n}{\infty}}\E{ F_{n}^k(\delta,t)}&=
\lim\limits_{ \tgo{\delta}{0^+} }\varlimsup\limits_{\tgo{n}{\infty}}\E{F_{n}^k(\delta,t)}\\ \nonumber
&=\pthesis{\frac{\pi \,t}{\sqrt{\ell}}}^{k}\frac{k!}{\Gamma
\pthesis{1+\frac{k}{2}}},
\end{align}
where  the constant $\ell$ and the function $\underline{\ell}$ are as defined in \eqref{ellandphi} and \eqref{ine1}, respectively.
\end{prop}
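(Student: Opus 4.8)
The plan is to compute the moments $\E{F_n^k(\delta,t)}$ exactly and then trap them between two Gaussian comparisons. First I would expand the $k$-th power of $F_n(\delta,t)$ by the symmetrization identity \eqref{symmetric} applied to $V(s)=\int_{\abs{x}\le\delta}dx\,e^{\dot{\iota}x\cdot X_s}$, take expectations, and insert the joint characteristic function \eqref{incmts}. This gives
\begin{equation*}
\E{F_n^k(\delta,t)}=\frac{k!}{a^k(n)}\int_{D_k(t^2a^2(n))}ds^{(k)}\int_{[-\delta,\delta]^k}dx^{(k)}\,\exp\pthesis{-\mysum{i}{1}{k}\Psi\pthesis{\mysum{j}{i}{k}x_j}(s_i-s_{i-1})}.
\end{equation*}
The unit-Jacobian change of variables $y_i=\sum_{j=i}^{k}x_j$ converts the exponent into $\sum_i\Psi(y_i)(s_i-s_{i-1})$ and the cube $[-\delta,\delta]^k$ into the staircase region $R=\set{\abs{y_i-y_{i+1}}\le\delta,\ i=1,\dots,k}$ (with $y_{k+1}=0$), on which every coordinate satisfies $\abs{y_i}\le k\delta$.

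For the uniform bound \eqref{finmom1} (with $k$ replaced by $2k$) I would bound $\Psi$ below by \eqref{ine1}, i.e. $\Psi(y_i)\ge\underline{\ell}(2k\delta)\,y_i^2$ since $\abs{y_i}\le 2k\delta$, enlarge $R$ to all of $\R^{2k}$ (the integrand is positive), and evaluate the resulting product of one-dimensional Gaussian integrals, which contributes $\pthesis{\pi/\underline{\ell}(2k\delta)}^{k}\prod_i(s_i-s_{i-1})^{-1/2}$. What remains is $\int_{D_{2k}(L)}ds^{(2k)}\prod_i(s_i-s_{i-1})^{-1/2}$, which is exactly the left side of Lemma \ref{polarcoor} with $F\equiv1$; since $H_1^{(2k)}\equiv 2^{-2k}\abs{\mathbb{S}^{2k-1}}$ the polar integral equals $\abs{\mathbb{S}^{2k-1}}L^{k}/(2k)=\pi^{k}L^{k}/k!$, and substituting $L=t^2a^2(n)$ yields \eqref{finmom1}. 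Because this bound is independent of $n$, the sequence $\set{F_n^k(\delta,t)}$ is $L^2$-bounded, hence uniformly integrable.

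The limit \eqref{finmom} is obtained by sandwiching the same exact expression. For the upper half I again use $\Psi(y_i)\ge\underline{\ell}(k\delta)y_i^2$ and enlarge $R$ to $\R^k$, which gives, for every $n$, $\E{F_n^k(\delta,t)}\le\pthesis{\pi t/\sqrt{\underline{\ell}(k\delta)}}^k\,k!/\Gamma(1+k/2)$; thus the same bound holds for $\varlimsup_{n}$, and letting $\tgo{\delta}{0^+}$ with \eqref{limitc} closes it. For the lower half I would instead bound $\Psi(y_i)\le\overline{\ell}(k\delta)y_i^2$ and shrink $R$ to the product box $B=\set{\abs{y_i}\le\delta/2}\subseteq R$, so that the $y$-integral factorizes with the $i$-th factor equal to $(s_i-s_{i-1})^{-1/2}F\pthesis{\tfrac{\delta}{2}\sqrt{s_i-s_{i-1}}}$, where $F(\lambda)=\int_{-\lambda}^{\lambda}e^{-\overline{\ell}(k\delta)w^2}dw$ is bounded, non-decreasing, with $\|F\|_{\infty}=\sqrt{\pi/\overline{\ell}(k\delta)}$. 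Now Lemma \ref{polarcoor} rewrites the time integral as $2^k\int_0^{\sqrt L}r^{k-1}H_F^{(k)}(r,\delta/2)\,dr$, and Lemma \ref{basiclem} (namely \eqref{Hlim}) together with \eqref{limit} evaluates its $\tgo{n}{\infty}$ limit, giving $\varliminf_{n}\E{F_n^k(\delta,t)}\ge\pthesis{\pi t/\sqrt{\overline{\ell}(k\delta)}}^k\,k!/\Gamma(1+k/2)$; letting $\tgo{\delta}{0^+}$ matches the upper half and proves \eqref{finmom}.

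The main obstacle is the coupling produced by the increment structure $\Psi\pthesis{\sum_{j\ge i}x_j}$ (equivalently, the dependence of the $X_{s_i}$), which prevents the moment integral from factorizing outright. The resolution is that, after passing to the $y$-variables, this coupling resides entirely in the region of integration $R$; enlarging $R$ to $\R^k$ for the upper estimate and shrinking it to the box $B$ for the lower estimate both decouple the integral, and the two estimates share the same leading order because at large times the Gaussian weight concentrates near the origin, where $\Psi(y)\sim\ell y^2$ and the boundary of $R$ is invisible. A minor point to verify is the degenerate case $k=1$, where $H_F^{(k)}$ and $\abs{\mathbb{S}^{k-1}}$ take their boundary forms; the identical computation still returns $2t\sqrt{\pi/\ell}=\pthesis{\pi t/\sqrt{\ell}}\,1!/\Gamma(3/2)$, in agreement with \eqref{finmom}.
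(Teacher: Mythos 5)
Your proposal is correct and follows essentially the same route as the paper: the same expansion via \eqref{symmetric} and \eqref{incmts}, the same change of variables $y_i=\sum_{j\geq i}x_j$ with the sandwich $\set{\abs{y_i}\leq\delta/2}\subset R\subset\set{\abs{y_i}\leq k\delta}$, the quadratic comparison \eqref{ine1}, and Lemmas \ref{polarcoor} and \ref{basiclem} to evaluate the limits. Your only deviation---integrating the Gaussian over all of $\R^k$ for the upper bounds rather than over the box $\set{\abs{y_i}\leq k\delta}$---is a harmless streamlining, since the paper's use of $\|F\|_{\infty}=1$ in \eqref{Hineq} amounts to the same computation.
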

\begin{proof}
Under the notation given in the previous section (and due to \eqref{incmts}) we have that
$\E{F_n^k(\delta,t)}$ is equal to
\begin{align*}
&\,\,\,\,\,\,\,\, \frac{k!}{a^k(n)}\int_{D_k(t^2a^2(n))}ds^{(k)}\int_{\set{|x_i|\leq\delta, \,i=1,..,k}}dx^{(k)}\,e^{-\mysum{i}{1}{k}\Psi(\mysum{j}{i}{k}x_j)(s_i-s_{i-1})} \\ \nonumber
&=\frac{k!}{a^k(n)}\int_{D_k(t^2a^2(n))}ds^{(k)}\int_{\set{|y_k|\leq\delta,\,
|y_i-y_{i+1}|\leq \delta,\,i=1,..,k-1}}dy^{(k)}\,e^{-\mysum{i}{1}{k}\Psi(y_i)(s_i-s_{i-1})},
\end{align*}
where  the last equality is obtained by making the change of variables $y_i=\mysum{j}{i}{k}x_j$.

We also note that 
\begin{align}\label{subsets}
\set{|y_i|\leq \delta/2,i=1,...,k}&\subset{\set{|y_k|\leq\delta,
|y_i-y_{i+1}|\leq \delta,i=1,...,k-1}}\nonumber\\
&\subset\set{|y_i|\leq k\delta,i=1,...,k}.
\end{align}

Before proceeding,  let us define
\begin{equation}\label{Ndist}
 F(\lambda)=\frac{1}{\sqrt{2\pi}}\int_{|y|\leq \lambda}dy\, e^{-y^2/2},
\end{equation}
 so that for every $A,M>0$, we have
\begin{equation*}
\int_{\set{|y_i|\leq M\delta, i=1,..,k}}dy^{(k)}\,e^{-\mysum{i}{1}{k}y_i^2(s_i-s_{i-1})A}=\left(\frac{\pi}{A}\right)^{k/2}\myprod{i}{1}{k}\frac{F(M\delta\sqrt{2A(s_i-s_{i-1})})}{\sqrt{s_i-s_{i-1}}}.
\end{equation*}

Let $k\in \mathbb{N}$. By combining  \eqref{ine1}, \eqref{subsets} and Lemma \ref{polarcoor}, we obtain that $\E{F_n^k(\delta,t)}$ is bounded above and below  by terms of the form
\begin{align}\label{generalbound}
\left(\frac{\pi}{A}\right)^{k/2}\frac{2^k\,k!}{a^k(n)}
\int_{0}^{ta(n)}dr\,\,r^{k-1}H_F^{(k)}\pthesis{r,M\delta\sqrt{2A}}
\end{align}
where
\begin{enumerate}
\item [$(i)$] for the lower--bound, $A=\overline{\ell}(2^{-1}\delta)$  and  $M=2^{-1}$,
\item[$(ii)$] for the upper--bound, $A=\underline{\ell}(\delta\, k)$ and  $M=k$.
\end{enumerate}
Next, by  putting together  \eqref{limit} with \eqref{Hlim},  and
recalling that   $|\mathbb{S}^{k-1}|=2\,\pi^{k/2}\,\Gamma^{-1}(k/2)$, we deduce
\begin{align}\label{lim4}
\lim\limits_{\tgo{n}{\infty}}\frac{1}{a^k(n)}\int_{0}^{ta(n)}dr
\,\,r^{k-1}H_F^{(k)}(r,M\delta\sqrt{2A})&=
\frac{t^k}{k}\lim\limits_{\tgo{r}{\infty}}
H_F^{(k)}(r,M\delta\sqrt{2A}) \\ \nonumber&=
\pthesis{\frac{\pi^{1/2}\,t}{2}}^k\frac{1}{\Gamma\pthesis{1+\frac{k}{2}}},
\end{align}
for any $\delta,M,A>0$ fixed and all $k\geq1$. From the above limit, we conclude by \eqref{generalbound} and  \eqref{Hineq} that
\begin{equation*}
\E{F_n^k(\delta,t)}\leq \left(\frac{t\,\pi}{\sqrt{\underline{\ell}(k\, \delta)}}\right)^{k}\, \frac{k!} {\Gamma\pthesis{1+\frac{k}{2}}}, 
\end{equation*}
which  implies  \eqref{finmom1}. On the other hand, the fact that $\E{F_n^k(\delta,t)}$ is bounded by terms of the form \eqref{generalbound} and the foregoing  limit
\eqref{lim4} yield
\begin{equation*}
\left(\frac{t\,\pi}{\sqrt{\overline{\ell}(2^{-1}\, \delta)}}\right)^{k}\, \frac{k!} {\Gamma\pthesis{1+\frac{k}{2}}}\leq \varliminf\limits_{ 
\tgo{n}{\infty}}\E{F_{n}^k(\delta,t)}
\end{equation*}
and
\begin{equation*}
\varlimsup\limits_{ 
\tgo{n}{\infty}}\E{F_{n}^k(\delta,t)}\leq \left(\frac{t\,\pi}{\sqrt{\underline{\ell}(k\, \delta)}}\right)^{k}\, \frac{k!} {\Gamma\pthesis{1+\frac{k}{2}}}. 
\end{equation*}
Thus, \eqref{finmom} follows by letting $\tgo{\delta}{0^{+}}$ in the above expressions and using \eqref{limitc}.
\end{proof}

We now observe that the proof of the Theorem \ref{mainthm} follows by setting 
\begin{align*}\label{Idef}
I_{n,\delta}^{(1)}(t)&=(2\pi)^{-1}\left(F_{n,1}(\delta,t)+F_{n,2}(\delta,t)
\right),\nonumber \\
I_{n,\delta}^{(2)}(t)&=(2\pi)^{-1}F_{n}(\delta,t).
\end{align*}

\begin{rmk} We point out that Proposition \ref{intpropo} remains true for any symmetric L\'{e}vy process with characteristic exponent satisfying only condition \eqref{ellandphi}.
\end{rmk}

\section{proof of  theorem  \ref{weaklim} and corollary \ref{maincor} }\label{sec:maincor}
{\bf Proof of Theorem \ref{weaklim}:}
We will begin by  showing that the sequence 
$$\set{I_{n,\delta}^{(2)}(t)=(2\pi)^{-1}F_n(\delta,t): n\in \mathbb{N}},$$
with $F_n(\delta,t)$ as defined in Proposition \ref{intpropo} is Cauchy in
$L^{2}$ under the assumption \eqref{seqcond}.
 Let $N,n\in \mathbb{N}$. It is easy to see that
\begin{align*}
F_{n+N}(\delta,t)-F_{n}(\delta,t)=
\mathcal{I}_{\,n,N}(\delta,t)+ \mathcal{II}_{\,n,N}(\delta,t),
\end{align*}
where
\begin{align*}
\mathcal{I}_{\,n,N}(\delta,t)&=\frac{1}{a(n+N)}\int_{t^2a^2(n)}^{
t^{2}a^2(n+N)}ds\,\int_{\abs{x}\leq \delta}dx\,e^{\dot{\iota}x\cdot X_s },\\ 
\mathcal{II}_{\,n,N}(\delta,t)&=\pthesis{\frac{a(n)}{a(n+N)}-1}F_{n}(
\delta,t).
\end{align*}

We observe by appealing to \eqref{finmom1} with $k=1$ and
\eqref{seqcond}  that
$$\lim\limits_{\tgo{n}{\infty}}
\E{\abs{\mathcal{II}_{n,N}(\delta,t)}^2}\leq 
\lim\limits_{\tgo{n}{\infty}}\pthesis{\frac{a(n)}{a(n+N)}-1}^2\,
\frac{2\pi^2\,t^2}{\underline{\ell}(2\,\delta)}=0.$$
On the other hand, by employing the techniques developed for the
proof of the Proposition \ref{intpropo}, we have that $\E{\abs{\mathcal{I}_{n,N}(\delta,t)}^2}$ is bounded above by
\begin{align*}
&\frac{2\pi}{a^2(n+N)\,\underline{\ell}(2\delta)}\int_{t^2a^2(n)}^{t^2a^2(n+N)}ds_1
\int_{s_1}^{t^2a^2(n+N)}ds_2
\myprod{i}{1}{2}\frac{F\pthesis{\xi \, \sqrt{s_i-s_{i-1}}}}{\sqrt{s_i-s_{i-1}}}=\\
&\frac{8\pi}{a^2(n+N)\,\underline{\ell}(2\delta)}
\int_{ta(n)}^{ta(n+N)}du_1\int_{0}^{\sqrt{t^{2}a^2(n+N)-u_1^2}}du_2 \myprod{i}{1}{2}F \pthesis{\xi \,u_i},
\end{align*}
where $\xi=2\delta \sqrt{2\underline{\ell}(2\delta)}$ and the function $F$ is as defined in \eqref{Ndist}.
Thus, by making use of   the facts that $||F||_{\infty}= 1$ and
$$\int_{ta(n)}^{ta(n+N)}du_1\int_{0}^{\sqrt{t^{2}a^2(n+N)-u_1^2}}du_2=t^{2}a^2(n+N)\int_{\frac{a(n)}{a(n+N)}}^{1}
\sqrt{1-w^2},$$ we arrive by \eqref{seqcond} at
\begin{align*}
\lim\limits_{\tgo{n}{\infty}}\E{\abs{\mathcal{I}_{\,n,N}(\delta,t)}^2}\leq \lim\limits_{\tgo{n}{\infty}}
\frac{8\,\pi\, t^2}{\underline{\ell}(2\delta)} \int_{\frac{a(n)}{a(n+N)}}^{1}
\sqrt{1-w^2}=0.
\end{align*}
Therefore, by denoting $I_{\delta}(t)$ the limit in $L^{2}$ of the sequence $\set{I_{n,\delta}^{(2)}(t):n\in \mathbbm{N}}$ and 
 based on the classical results about weak convergence and uniformly integrability presented in \cite[\S25]{Bil}, we conclude that 
\begin{enumerate}
\item[$(a)$] 
$
\pthesis{I_{n,\delta}^{(2)}(t)}^{k} \overset{\mathcal{L}}{\longrightarrow}\,I_{\delta}^k(t),\,\, \tgo{n}{\infty}.
$
\item[$(b)$]
$I_{\delta}(t) \in L^{k}$, for all $k>1$ integer
(due to \eqref{finmom2} ) and
$\lim\limits_{\tgo{n}{\infty}}
\E{ \pthesis{I_{n,\delta}^{(2)}(t)}^k}$ exists and it equals
$\E{I_{\delta}^k(t)}$.
\end{enumerate}
The aforementioned conclusions allow us to apply Slutsky's  Theorem to conclude that 
\begin{align*}
I_{n}(t) \overset{\mathcal{L}}{\longrightarrow}\wh{f}(0)\,I_{\delta}(t),
\end{align*}
as $\tgo{n}{\infty}$ for every $\delta>0$. The above  weak limit  implies  that the   random variables $\set{I_{\delta}(t):\delta>0}$ are identically distributed so that 
$\E{I_1^k(t)}=\E{I_{\delta}^k(t)}$ for all $\delta>0$. Thus, by \eqref{limits} and part $(b)$ above, we arrive at
$$\E{I_1^k(t)}=\lim\limits_{\tgo{\delta}{0+}}\E{I_{\delta}^k(t)}=
\pthesis{\frac{t}{2\sqrt{\ell}}}^k \frac{k!}{\Gamma
\pthesis{1+\frac{k}{2}}}.$$
Hence, the proof of Theorem \ref{weaklim} is complete by
taking $I(t)=I_1(t)$.\qed
\\

{\bf Proof of Corollary \ref{maincor}:} 
For $f\in \mathcal{D}$, consider  the  decomposition
provided by Theorem \ref{mainthm}. Namely, for $ \delta>0$
  \begin{equation}\label{decomp.rel}
I_{n}(t)=\frac{1}{a(n)}\int_{0}^{t^2a^2(n)}ds\,f(X_s)= I_{n,\delta}^{(1)}(t)+\wh{f}(0)I_{n,\delta}^{(2)}(t).
 \end{equation}

Let us denote
$$A_{n}(t)=\sqrt{\E{\abs{I_n(t)}^2}}=||I_{n}(t)||_{L^{2}},$$
so that due to \eqref{decomp.rel}, we obtain
$$A_{n}(t)=|| I_{n,\delta}^{(1)}(t)+\wh{f}(0)I_{n,\delta}^{(2)}(t)||_{L^{2}}.$$

By employing the triangle inequality, we have
\begin{align*}
 -|| I_{n,\delta}^{(1)}(t)||_{L^{2}} +
 || \wh{f}(0)\,I_{n,\delta}^{(2)}(t)||_{L^{2}}
 \leq A_{n}(t)  
 \leq  || I_{n,\delta}^{(1)}(t)||_{L^{2}} +
 || \wh{f}(0)\,I_{n,\delta}^{(2)}(t)||_{L^{2}}.
\end{align*}
By part $(i)$ of Theorem \ref{mainthm}, we know that $I^{(1)}_{n,\delta}(t)$ converges to zero in $L^2$ as $\tgo{n}{\infty}$ (which also holds in $L^{1}$ again because of the Cauchy-Schwarz inequality) so that the last inequality implies

\begin{align*}
\abs{\wh{f}(0)}\varliminf \limits_{\tgo{n}{\infty}}||\,I_{n,\delta}^{(2)}(t)||_{L^{2}}
&\leq
\varliminf \limits_{\tgo{n}{\infty}}A_n(t)\\ &\leq 
\varlimsup \limits_{\tgo{n}{\infty}}A_n(t)\leq
\abs{\wh{f}(0)}
\varlimsup \limits_{\tgo{n}{\infty}}||\,I_{n,\delta}^{(2)}(t)||_{L^{2}}.
\end{align*}
Thus, by letting $\tgo{\delta}{0+}$ in the above expression  and appealing to part $(ii)$ of Theorem \ref{mainthm}, we deduce
$$\varliminf \limits_{\tgo{n}{\infty}}A_n^2(t)=\varlimsup \limits_{\tgo{n}{\infty}}A_n^2(t)=(\wh{f}(0))^2\,\frac{t^2}{2\ell},$$
which shows \eqref{seclim}.

Next, \eqref{firstlim} is easy to calculate by using part of the arguments given above and is left to the reader.  This completes the proof. \qed
\\

Finally, as mentioned in the introduction, the results in this paper were motivated by the Nualart--Xu results \cite{Nualart}.  It is interesting to note that many of the computations for the Fourier transform in \cite{Nualart} are similar to those used by Ba\~nuelos and S\'a Barreto \cite{Ba.Sab} and in the author's paper \cite{Acuna} to compute the heat invariants for Schr\"odinger operators for the Laplacian and the fractional Laplacian. In these papers  one uses Fourier transform methods to obtain estimates on  Feynman--Kac  expressions of the form 
\begin{align}\label{expfunct}
\mathbb{E}^{t}_{x,x}\left[e^{-\int_{0}^{t}V(X_s)ds}\right].
\end{align}
Here, $X$ is the symmetric $\alpha$-stable process and  $\mathbb{E}^{t}_{x,x}$ stands for the expectation with respect to the process (stable bridge) starting at $x$ and conditioned to be at
$x$ at time $t$.  The function (potential $V$) is infinitely differentiable of compact support. One interesting problem is  to obtain estimates and properties of \eqref{expfunct} with less regularity on the functions $V$. In this direction, in \cite{Ba.Sel}, Ba\~{n}uelos and Selma employed the Taylor expansion of the exponential 
function  and probabilistic techniques  to investigate 
the $k$-th moment of $\int_{0}^{t}V(X_s)ds$ with respect to the stable bridge, for $V's$ which are H\"{o}lder continuous. 
Expressions similar to those in Nualar--Xu \cite{Nualart} for computation of moments are derived.

{\bf Acknowledgements}: I am grateful  to my supervisor,  Professor Rodrigo Ba\~nuelos, for his valuable suggestions and time while preparing this paper.   I wish to thank the referee for the many comments and suggestions which considerably improved the paper.

\end{document}